\documentclass{amsart}

\usepackage{amsmath}
\usepackage{amssymb}
\usepackage[]{amsrefs}
\usepackage{xpatch}
\usepackage{kantlipsum} 
\setlength{\textwidth}{\paperwidth}
\addtolength{\textwidth}{-2in}
\calclayout

\xpatchcmd{\proof}{\itshape}{\prooflabelfont}{}{}
\newcommand{\prooflabelfont}{\bfseries}

\DefineSimpleKey{bib}{primaryclass}{}
\DefineSimpleKey{bib}{archiveprefix}{}

\BibSpec{arXiv}{%
  +{}{\PrintAuthors}{author}
  +{,}{ \textit}{title}
  +{}{ \parenthesize}{date}
  +{,}{ arXiv }{eprint}
  +{,}{ primary class }{primaryclass}
}

\usepackage{kantlipsum} 
\setlength{\textwidth}{\paperwidth}
\addtolength{\textwidth}{-2in}
\calclayout
\usepackage{extarrows}
\usepackage{amssymb}
\usepackage[utf8]{inputenc}
\newtheorem{theorem}{Theorem}[section]
\newtheorem{proposition}[theorem]{Proposition}
\newtheorem{lemma}[theorem]{Lemma}
\newtheorem{corollary}[theorem]{Corollary}
\theoremstyle{definition}

\theoremstyle{definition}
\newtheorem{definition}[theorem]{Definition}
\newtheorem{remark}[theorem]{Remark}
\newtheorem{questions}[theorem]{Questions}

\DeclareMathOperator{\gr}{\text{gr}_{\mathfrak{m}}}
\DeclareMathOperator{\gll}{\text{g}\ell\ell}
\DeclareMathOperator{\ggl}{\bf{ggl}}

\numberwithin{equation}{section}
\usepackage{amsthm,amsmath,amssymb}
\usepackage[all,cmtip]{xy}
\usepackage{amsmath}
\usepackage{amssymb}
\usepackage{amsthm}
\usepackage[]{amsrefs}
\usepackage{hyperref}
\usepackage{xpatch}
\usepackage{amsfonts}
\usepackage{amssymb}
\usepackage[utf8]{inputenc}
\usepackage{amsthm}
\usepackage{stmaryrd}
\usepackage{csquotes}
\usepackage{extarrows}
\MakeOuterQuote{"}
\theoremstyle{definition}
\begin{document}
\date{\today}

\title[The index of a numerical semigroup ring]{The index of a numerical semigroup ring}

\author[Richard F. Bartels]{Richard F. Bartels}

\address{215 Carnegie Building, Dept. of Mathematics, Syracuse University, Syracuse, NY, 13244}

\email{rfbartel@syr.edu}

\urladdr{https://sites.google.com/view/richard-bartels-math/home}

\subjclass[2020]{11A07, 13A02, 13A30,  13C10, 13E05, 13E10, 13E15, 13H05, 13H10, 13H15, 13P05. }

\keywords{Auslander's delta invariant, index, generalized Loewy length, generalized graded length, Ding's conjecture, hypersurface ring, numerical semigroup ring, Cohen-Macaulay, Gorenstein}

\title[Generalized Loewy Length of Cohen-Macaulay Local and Graded Rings]{Generalized Loewy Length of Cohen-Macaulay Local and Graded Rings}
\begin{abstract}
We generalize a theorem of Ding relating the generalized Loewy length $\text{g}\ell\ell(R)$ and index of a one-dimensional Cohen-Macaulay local ring \,$(R,\mathfrak{m},k)$. Ding proved that if $R$ is Gorenstein, the associated graded ring is Cohen-Macaulay, and $k$ is infinite, then the generalized Loewy length and index of $R$ are equal. However, if $k$ is finite, equality may not hold. We prove that if the index of a one-dimensional Cohen-Macaulay local ring is finite and the associated graded ring has a homogeneous nonzerodivisor of degree $t$, then $\text{g}\ell\ell(R) \leq \text{index}(R)+t-1$. Next we prove that if $R$ is a one-dimensional hypersurface ring with a witness to the generalized Loewy length that induces a regular initial form on the associated graded ring, then the generalized Loewy length achieves this upper bound. We then compute the generalized Loewy lengths of several families of examples of one-dimensional hypersurface rings over finite fields. Finally, we study a graded version of the generalized Loewy length and determine its value for numerical semigroup rings.
\end{abstract}
\maketitle
\large{
\begin{center} \section{Introduction} \end{center} Let $(R,\mathfrak{m},k)$ be a Cohen-Macaulay local ring of Krull dimension $d$. For a finitely-generated $R$-module $M$, Auslander's $\delta$-invariant, denoted $\delta_{R}(M)$, is the smallest non-negative integer $n$ such that there exists a surjective $R$-map $X \oplus R^{n} \longrightarrow M$, where $X$ is a maximal Cohen-Macaulay $R$-module with no free direct summand. For finitely-generated $R$-modules $M$ and $N$, $\delta_{R}$ satisfies the following properties [11, Corollary 11.28]. \newline\newline
\text{}\,\,\,\,(i) $\delta_{R}(M \oplus N)=\delta_{R}(M)+\delta_{R}(N)$. \newline
\text{}\,\,(ii) $\delta_{R}(N) \leq \delta_{R}(M)$ if there is a surjective $R$-map $M \longrightarrow N$. \newline
\text{}(iii) $\delta_{R}(M) \leq \mu_{R}(M)$. \newline\newline
We see from properties (ii) and (iii) that for each $n \geq 1$,\newline \begin{equation*}0 \leq \delta_{R}(R/\mathfrak{m}^{n}) \leq \delta_{R}(R/\mathfrak{m}^{n+1}) \leq 1\,. \end{equation*} \text{}\newline Therefore, if $\delta_{R}(R/\mathfrak{m}^{n_{o}})=1$ for some $n_{o}$, then $\delta_{R}(R/\mathfrak{m}^{n})=1$ for all $n \geq n_{o}$. By a result of Auslander, regular local rings are precisely the Cohen-Macaulay local rings for which $\delta_{R}(R/\mathfrak{m}^{n})=1$ for all $n \geq 1$ [11, Proposition 11.37]. It is natural to ask when the sequence $\{\delta_{R}(R/\mathfrak{m}^{n})\}_{n=1}^{\infty}$ stabilizes at one for different classes of Cohen-Macaulay rings. The smallest positive integer $n$ for which $\delta_{R}(R/\mathfrak{m}^{n})=1$ is the following numerical invariant defined by Auslander. \newline
\[ \text{index}(R):=\text{inf}\{n \geq 1\,|\,\delta_{R}(R/\mathfrak{m}^{n})=1\}\]
\text{}\newline Suppose $R$ is a Cohen-Macaulay local ring with canonical module $\omega$. The {\it{trace}} of $\omega$ in $R$, denoted $\tau_{\omega}(R)$, is the ideal of $R$ generated by all $R$-homomorphic images of $\omega$ in $R$. Ding proved that if $R$ is a Cohen-Macaulay local ring with canonical module such that $\mathfrak{m} \subseteq \tau_{\omega}(R)$, then $\text{index}(R)$ is finite and bounded above by the {\it{generalized Loewy length}} of $R$ \,[6, Proposition 2.4]. This invariant, denoted $\gll(R)$, is the smallest positive integer $n$ for which $\mathfrak{m}^{n}$ is contained in the ideal generated by a system of parameters of $R$. In particular, $\text{index}(R) \leq \text{g}\ell\ell(R)$ if $R$ is Gorenstein. If, in addition to being Gorenstein, $R$ has infinite residue field and Cohen-Macaulay associated graded ring $\gr(R)$, then
$\text{index}(R)=\text{g}\ell\ell(R)$ \,[7, Theorem 2.1]. \text{}\newline\newline In general, if $R$ is a Cohen-Macaulay local ring that satisfies the above equality, we say that $R$ satisfies Ding's conjecture. In this paper, we study how the finiteness of the residue field can cause Ding's conjecture to fail. In particular, we prove that there are infinitely-many hypersurfaces with Cohen-Macaulay associated graded ring and finite residue field that do not satisfy Ding's conjecture. Each of our families of hypersurfaces generalizes an example of Hashimoto and Shida [8, Example 3.2], who showed for $R=\mathbb{F}_{2}\llbracket x,y \rrbracket/(xy(x+y))$ that $\text{index}(R)=3$\, and $\gll(R)=4$.\text{}\newline\newline 
When $k$ is finite, the assumption that $\gr(R)$ is Cohen-Macaulay does not guarantee the existence of a homogeneous system of parameters of degree one \,$x_{1}^{*},...,x_{d}^{*}$ \,in $(\gr(R))_{1}$. If a homogeneous system of parameters in $\gr(R)$ does not consist of linear elements, it cannot be used in Ding's argument to prove that $\text{index}(R)=\gll(R)$. \newline\newline However, if $R$ is a one-dimensional Cohen-Macaulay local ring with finite index and $\gr(R)$ is Cohen-Macaulay, then we can use a homogeneous $\gr(R)$-regular element of minimal degree to obtain an upper bound for $\gll(R)$ in terms of $\text{index}(R)$. In Theorem 2.3, we prove that if $R$ is one-dimensional Cohen-Macaulay and $\gr(R)$ has a homogeneous nonzerodivisor $z^{*}$, where $z \in \mathfrak{m}^{t} \setminus \mathfrak{m}^{t+1}$, then \newline
\[\gll(R) \leq \text{index}(R)+t-1\,. \]If $R$ is Gorenstein, then \[\text{index}(R) \leq \gll(R) \leq \text{index}(R)+t-1\,.\]\text{}\newline When $R$ is a hypersurface ring, we have $\text{index}(R)=e(R)$, where $e(R)$ denotes the Hilbert-Samuel multiplicity of $R$ [5, Theorem 3.3]. Therefore, the index of hypersurface rings is easy to compute: if $R=k\llbracket x_{1},...,x_{n}\rrbracket/(f)$, $\mathfrak{m}=(x_{1},...,x_{n})R$,\, and $f\in \mathfrak{m}^{r} \setminus \mathfrak{m}^{r+1}$, then\, $\text{index}(R)=e(R)=r$. \newline\newline In section 3, we prove that if $R$ is a one-dimensional hypersurface with a witness $z$ to its generalized Loewy length that induces a regular initial form on $\gr(R)$, then \newline \[ \gll(R)=\text{ord}_{R}(z)+e(R)-1.\] \text{}\newline
We then compute the generalized Loewy lengths of families of examples of one-dimensional hypersurface rings with finite residue field and Cohen-Macaulay associated graded ring. These examples illustrate differences between hypersurface rings $R$ with finite residue field and Cohen-Macaulay associated graded ring for which $\gll(R)=\text{index}(R)$ and $\gll(R)=\text{index}(R)+1$. In [3], De Stefani gave examples of one-dimensional Gorenstein local rings with infinite residue field for which $\gll(R)=\text{index}(R)+1$.
\newline\newline  
In section 4, we let $R$ be a positively-graded Noetherian $k$-algebra, where $k$ is an arbitrary field. We show that several families of one-dimensional standard graded hypersurfaces attain the graded version of the upper bound for the generalized Loewy length from Theorem 2.3. We then study a graded version of the generalized Loewy length: the {\it{generalized graded length}} of $R$, denoted $\ggl(R)$. After determining bounds for $\ggl(R)$ in terms of $\gll(R)$ and the minimum and maximum degrees of generators of $R$, we compute the generalized graded length of numerical semigroup rings. For $R=k[t^a,t^b]$, where $a<b$, we prove that $\ggl(R)=ba-b+1$ and if $z$ is a witness to $\ggl(R)$, then $(z)=(t^{ia})$ for some $1 \leq i \leq 1+b-a$.\newline\newline 
\section{\large{Estimating the Generalized Loewy Length of One-Dimensional Cohen-Macaulay Rings}}\text{}\newline Throughout this section, $(R,\mathfrak{m},k)$ is a local ring. We assume that $R$ \,has a nonzerodivisor $x$ of order $t$ such that multiplication by $x$ is injective on graded components of the associated graded ring in degrees less than $\text{index}(R)$. Generalizing [7, Lemma 2.3] to this context, we prove that if $R$ is a one-dimensional Cohen-Macaulay local ring with finite index, then $\gll(R) \leq \text{index}(R)+t-1$. \newline
\begin{lemma} Let $s$ and $t$ be positive integers and $x \in \mathfrak{m}^{t} \setminus \mathfrak{m}^{t+1}$ an $R$-regular element. Suppose the induced map $\overline{x}:\mathfrak{m}^{i-1}/\mathfrak{m}^{i} \longrightarrow \mathfrak{m}^{i+t-1}/\mathfrak{m}^{i+t}$ is injective for $1 \leq i \leq s$. Then \newline \[(\mathfrak{m}^{s+t-1},x)/x\mathfrak{m}^{s} \cong R/\mathfrak{m}^{s} \oplus (\mathfrak{m}^{s+t-1},x)/xR.\]\text{} \end{lemma}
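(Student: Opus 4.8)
The plan is to realize the asserted decomposition as a split short exact sequence. Write $I=(\mathfrak{m}^{s+t-1},x)=\mathfrak{m}^{s+t-1}+xR$. Since $x\in\mathfrak{m}^{t}$ we have $x\mathfrak{m}^{s}\subseteq\mathfrak{m}^{s+t}\subseteq\mathfrak{m}^{s+t-1}$ as well as $x\mathfrak{m}^{s}\subseteq xR\subseteq I$, so there is a short exact sequence of $R$-modules
\[ 0\longrightarrow xR/x\mathfrak{m}^{s}\longrightarrow I/x\mathfrak{m}^{s}\longrightarrow I/xR\longrightarrow 0. \]
Because $x$ is a nonzerodivisor, multiplication by $x$ induces $R/\mathfrak{m}^{s}\cong xR/x\mathfrak{m}^{s}$, so the left-hand term is $R/\mathfrak{m}^{s}$ and the right-hand term is exactly $(\mathfrak{m}^{s+t-1},x)/xR$. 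Thus it suffices to split this sequence, which I will do by exhibiting an explicit complement to the submodule $A:=xR/x\mathfrak{m}^{s}$ inside $M:=I/x\mathfrak{m}^{s}$.

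Set $B:=\mathfrak{m}^{s+t-1}/x\mathfrak{m}^{s}$, the image of $\mathfrak{m}^{s+t-1}$ in $M$, so that $A+B=M$ and $A\cap B=(xR\cap\mathfrak{m}^{s+t-1})/x\mathfrak{m}^{s}$. The first key step is the computation $xR\cap\mathfrak{m}^{s+t-1}=x\mathfrak{m}^{s-1}$. The inclusion $\supseteq$ is immediate from $x\in\mathfrak{m}^{t}$; for $\subseteq$, suppose $xr\in\mathfrak{m}^{s+t-1}$ with $r\notin\mathfrak{m}^{s-1}$, so $r\in\mathfrak{m}^{j-1}\setminus\mathfrak{m}^{j}$ for some $j\le s-1$. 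Applying the injectivity of $\overline{x}$ at level $i=j$ gives $xr\in\mathfrak{m}^{j+t-1}\setminus\mathfrak{m}^{j+t}$, whence $\operatorname{ord}(xr)=j+t-1\le s+t-2$, contradicting $xr\in\mathfrak{m}^{s+t-1}$. Hence $C:=A\cap B=x\mathfrak{m}^{s-1}/x\mathfrak{m}^{s}$, a module annihilated by $\mathfrak{m}$. A routine submodule argument now reduces the problem: if I can write $B=C\oplus B'$ for some submodule $B'$, then $A+B'=A+C+B'=A+B=M$ (as $C\subseteq A$) and $A\cap B'\subseteq (A\cap B)\cap B'=C\cap B'=0$, so $M=A\oplus B'$ with $B'\cong M/A=I/xR$, as required.

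It remains to split $C$ off from $B$. I plan to pass to $B/\mathfrak{m}B=\mathfrak{m}^{s+t-1}/\mathfrak{m}^{s+t}$ and show $C\cap\mathfrak{m}B=0$. Here $\mathfrak{m}B=\mathfrak{m}^{s+t}/x\mathfrak{m}^{s}$, so $C\cap\mathfrak{m}B=(x\mathfrak{m}^{s-1}\cap\mathfrak{m}^{s+t})/x\mathfrak{m}^{s}$, and the injectivity of $\overline{x}$ at level $i=s$ forces $x\mathfrak{m}^{s-1}\cap\mathfrak{m}^{s+t}=x\mathfrak{m}^{s}$: any $r\in\mathfrak{m}^{s-1}\setminus\mathfrak{m}^{s}$ satisfies $xr\notin\mathfrak{m}^{s+t}$. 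Since $C$ is annihilated by $\mathfrak{m}$ and meets $\mathfrak{m}B$ trivially, the projection $B\to B/\mathfrak{m}B$ is injective on $C$; choosing a $k$-linear complement $U$ to the image of $C$ in the vector space $B/\mathfrak{m}B$ and pulling it back to a submodule $B'\supseteq\mathfrak{m}B$ yields $B=C\oplus B'$ (every subspace of $B/\mathfrak{m}B$ is a submodule, and $k$-linear maps between $\mathfrak{m}$-torsion modules are $R$-linear). Combining with the previous paragraph gives $M=A\oplus B'\cong R/\mathfrak{m}^{s}\oplus (\mathfrak{m}^{s+t-1},x)/xR$.

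The main obstacle is entirely the two order-additivity facts $xR\cap\mathfrak{m}^{s+t-1}=x\mathfrak{m}^{s-1}$ and $x\mathfrak{m}^{s-1}\cap\mathfrak{m}^{s+t}=x\mathfrak{m}^{s}$; both fail without control on how multiplication by $x$ shifts orders, and both are precisely where the injectivity hypothesis on $\overline{x}$ in degrees $1\le i\le s$ enters. Everything after these identities is formal module theory (lifting a vector-space complement through $\mathfrak{m}B$). I would also record the boundary case $s=1$, where $\mathfrak{m}^{s-1}=R$ forces $C=A$ and hence $A\subseteq B=M$; the same argument applies verbatim, now splitting $A$ itself off from $M$.
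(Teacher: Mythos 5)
Your proof is correct, and while it rests on the same engine as the paper's argument --- the observation that the injectivity of $\overline{x}$ in degrees $1\le i\le s$ forces $\operatorname{ord}(xr)=\operatorname{ord}(r)+t$ whenever $\operatorname{ord}(r)\le s-1$, together with a choice of $k$-linear complement inside $\mathfrak{m}^{s+t-1}/\mathfrak{m}^{s+t}$ --- the packaging is genuinely different. The paper sets $I=xR\cap\mathfrak{m}^{s+t-1}$, takes a vector-space complement $V$ of $(I+\mathfrak{m}^{s+t})/\mathfrak{m}^{s+t}$, lifts a basis of $V$ to elements $y_1,\dots,y_n\in\mathfrak{m}^{s+t-1}$, and lets $B$ be the submodule they generate; it then needs Nakayama to see $A+B$ is everything and a hands-on computation with the $y_i$ to see $A\cap B=0$. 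You instead take the full image $\mathfrak{m}^{s+t-1}/x\mathfrak{m}^{s}$, pin down the intersection with $A$ exactly as $x\mathfrak{m}^{s-1}/x\mathfrak{m}^{s}$ (an identity the paper never states), observe it is a semisimple module meeting $\mathfrak{m}\cdot(\mathfrak{m}^{s+t-1}/x\mathfrak{m}^{s})$ trivially, and split it off by pulling back a complement of its image --- after which the sum condition is automatic from $C\subseteq A$ and no Nakayama is needed. Your version isolates more cleanly the two order-additivity identities $xR\cap\mathfrak{m}^{s+t-1}=x\mathfrak{m}^{s-1}$ and $x\mathfrak{m}^{s-1}\cap\mathfrak{m}^{s+t}=x\mathfrak{m}^{s}$ as the only places the hypothesis enters, at the cost of an extra layer of reduction; the paper's version is more explicit about what the complementary summand looks like in terms of generators. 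Both are complete, and your treatment of the boundary case $s=1$ is a nice touch the paper leaves implicit.
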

\begin{proof} Let $I=xR \cap \mathfrak{m}^{s+t-1}$ and $W=(I+\mathfrak{m}^{s+t})/\mathfrak{m}^{s+t}$. Since $W$ is a $k$-subspace of $\mathfrak{m}^{s+t-1}/\mathfrak{m}^{s+t}$, there is a direct sum decomposition
\[\mathfrak{m}^{s+t-1}/\mathfrak{m}^{s+t}=W \oplus V\,\] for some subspace $V \subseteq \mathfrak{m}^{s+t-1}/\mathfrak{m}^{s+t}$. Let $e_{1},...,e_{n}$ be a $k$-basis for $V$. For each $i$, let $e_{i}=\overline{y_{i}}$, where $y_{i} \in \mathfrak{m}^{s+t-1}$. Let $B$ denote the $R$-submodule of $(\mathfrak{m}^{s+t-1},x)/x\mathfrak{m}^{s}$ generated by $[y_{1}],...,[y_{n}] \in (\mathfrak{m}^{s+t-1},x)/x\mathfrak{m}^{s}$. We will prove that $(\mathfrak{m}^{s+t-1},x)/x\mathfrak{m}^{s}=A \oplus B$, where $A=xR/x\mathfrak{m}^{s}$.
 First we show that 
 \[A+B=(\mathfrak{m}^{s+t-1},x)/x\mathfrak{m}^{s}. 
 \]\text{}\newline
 Choose $r_{1},...,r_{\alpha} \in R$ such that $I=(r_{1}x,...,r_{\alpha}x)$. Then $\mathfrak{m}^{s+t-1}/\mathfrak{m}^{s+t}$ is generated as a vector space by $\{\overline{r_{i}x}\}_{i=1}^{\alpha}\cup\{\overline{y}_{j}\}_{j=1}^{n}$, and by Nakayama's lemma, $\mathfrak{m}^{s+t-1}$ is generated as an $R$-module by $\{r_{i}x\}_{i=1}^{\alpha}\cup\{y_{j}\}_{j=1}^{n}$. Let $[z] \in (\mathfrak{m}^{s+t-1},x)/x\mathfrak{m}^{s}$. Then $[z]=r[x]+r'[v]$, where $r,r' \in R$, $v \in \mathfrak{m}^{s+t-1}$, and $v=r''x+\sum_{i=1}^{n}\rho_{i}y_{i}$, where $r'',\rho_{i} \in R$. So \newline
 \[[z]=(r+r'r'')[x]+\sum_{i=1}^{n}r'\rho_{i}[y_{i}] \in A+B.\]\text{}\newline
Now we show that $A \cap B=0$. Let $[z] \in A \cap B$. Then $[z]=a[x]=\sum_{i=1}^{n}a_{i}[y_{i}]$, where $a,a_{i} \in R$, and $ax-\sum_{i=1}^{n}a_{i}y_{i}\in x\mathfrak{m}^{s}$. Let $ax-\sum_{i=1}^{n}a_{i}y_{i}=xy$, where $y \in \mathfrak{m}^{s}$. Then $\sum_{i=1}^{n}a_{i}y_{i}=(a-y)x \in I$, so \newline
\[\overline{(a-y)x}=\overline{0} \in \mathfrak{m}^{s+t-1}/\mathfrak{m}^{s+t}.\]\text{}\newline If $a=y$ we are done, so assume $a-y \neq 0$. Then there is a nonnegative integer $l$ such that $a-y \in \mathfrak{m}^{l} \setminus \mathfrak{m}^{l+1}$. Suppose $0 \leq l<s$. Since $\overline{(a-y)x}=\overline{0}$ in $\mathfrak{m}^{l+t}/\mathfrak{m}^{l+t+1}$, it follows from the injectivity of the induced map $\overline{x}$ that $a-y \in \mathfrak{m}^{l+1}$, a contradiction. Therefore, $a-y \in \mathfrak{m}^{s}$, and $ax-xy \in x\mathfrak{m}^{s}$. Since $xy \in x\mathfrak{m}^{s}$, $ax \in x\mathfrak{m}^{s}$, and $[z]=a[x]=[0]$. \newline\newline It follows that $(\mathfrak{m}^{s+t-1},x)/x\mathfrak{m}^{s}=xR/x\mathfrak{m}^{s} \oplus B$ and $B \cong (\mathfrak{m}^{s+t-1},x)/xR$. Since $x$ is $R$-regular, it follows that $(\mathfrak{m}^{s+t-1},x)/x\mathfrak{m}^{s} \cong R/\mathfrak{m}^{s} \oplus (\mathfrak{m}^{s+t-1},x)/xR$.\end{proof}\text{}
\begin{lemma} Let $(R,\mathfrak{m})$ be a local ring, $I \subseteq R$ an ideal, and $x,y \in \mathfrak{m}$ such that $(x,I)=(y)$. If $I$ is not a principal ideal, then $(x)=(y)$. \end{lemma}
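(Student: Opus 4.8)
The plan is to reduce everything to a single case analysis on one coefficient. The inclusion $(x) \subseteq (y)$ is automatic since $x \in (x,I) = (y)$, so the whole content of the lemma is to prove $y \in (x)$; I will argue that either this holds or else $I$ must be principal, and the latter is ruled out by hypothesis.

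First I would record the two basic consequences of $(x,I)=(y)$. From $x \in (y)$ I get $x = r_0 y$ for some $r_0 \in R$, giving $(x)\subseteq(y)$. Since also $y \in (y) = (x,I)$, I can write $y = ax + w$ with $a \in R$ and $w \in I$. The structural point is that $I \subseteq (x,I) = (y)$, so $w$ has the form $w = cy$ for some $c \in R$. Substituting produces the single relation $y(1-c) = ax$, and the proof turns entirely on whether $c$ is a unit.

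I then split using the fact that $R$ is local. If $c \in \mathfrak{m}$, then $1-c$ is a unit, so $y = (1-c)^{-1}ax \in (x)$ and hence $(x)=(y)$. If $c \notin \mathfrak{m}$, then $c$ is a unit, and from $w = cy \in I$ I obtain $y = c^{-1}w \in I$; together with $I \subseteq (y)$ this yields $I = (y)$, a principal ideal, contradicting the hypothesis. Since the two cases are exhaustive in a local ring, the first must hold and $(x)=(y)$.

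I do not anticipate a serious obstacle; the one point requiring care is seeing \emph{where} the non-principality of $I$ is used. It enters exactly in excluding the case $c \notin \mathfrak{m}$, which is precisely the situation in which $y$ itself lies in $I$ and collapses $I$ to the principal ideal $(y)$. The only remaining things to verify are that the representation $w = cy$ is legitimate (it is, because $w \in I \subseteq (y)$) and that the unit/non-unit dichotomy for $c$ is exhaustive, which is exactly the local hypothesis on $R$.
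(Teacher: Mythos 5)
Your proof is correct and follows essentially the same route as the paper's: write $y$ in terms of $x$ and an element of $I$, exploit the unit/non-unit dichotomy in the local ring, and rule out the bad case because it forces $y\in I$ and hence $I=(y)$, contradicting non-principality. The only cosmetic difference is which coefficient you case on (you use the coefficient in $w=cy$, the paper uses the one in $x=cy$), but the mechanism and the point where non-principality enters are identical.
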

\begin{proof} Let $a,b \in R$ and $z \in I$ such that $y=ax+bz$. Let $c \in R$ such that $x=cy$. Then $y=acy+bz$ and $(1-ac)y=bz$. Suppose $c \in \mathfrak{m}$. Then $1-ac$ is invertible and $y=(1-ac)^{-1}bz \in I$, so $(y)=I$, which is false. Therefore $c$ is invertible and $(x)=(y)$. \end{proof}\text{}
\begin{theorem}\label{thm:ineq} Let $(R,\mathfrak{m})$ be a one-dimensional Cohen-Macaulay local ring for which $\text{index}(R)$ is finite. Let $s=\text{index}(R)$ and $x \in \mathfrak{m}^{t} \setminus \mathfrak{m}^{t+1}$ a nonzerodivisor, where $t \geq 1$. If the induced map \newline
\[\overline{x}:\mathfrak{m}^{i-1}/\mathfrak{m}^{i} \longrightarrow \mathfrak{m}^{i+t-1}/\mathfrak{m}^{i+t}\]
is injective for $1 \leq i \leq s$, then \[\gll(R) \leq \text{index}(R)+t-1.\] \text{} \newline If $\mathfrak{m}^{s+t-1}$ is not a principal ideal, then $\mathfrak{m}^{s+t-1} \subseteq (x).$
\end{theorem}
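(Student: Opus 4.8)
The plan is to deduce both assertions from the single claim that the ideal $J:=(\mathfrak{m}^{s+t-1},x)$ is principal. Suppose this is known and write $J=(y)$. Because $J$ contains the nonzerodivisor $x$, the element $y$ is itself a nonzerodivisor: from $x\in(y)$ we have $x=ry$, so any relation $yb=0$ forces $xb=0$ and hence $b=0$. Thus $y$ is a parameter, and since $\mathfrak{m}^{s+t-1}\subseteq J=(y)$ we immediately get $\gll(R)\le s+t-1=\text{index}(R)+t-1$, the first conclusion. For the second, assume $\mathfrak{m}^{s+t-1}$ is not principal and apply Lemma 2.2 with $I=\mathfrak{m}^{s+t-1}$: since $(x,I)=J=(y)$ is principal while $I$ is not, Lemma 2.2 gives $(x)=(y)=J$, whence $\mathfrak{m}^{s+t-1}\subseteq(x)$.

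It remains to prove that $J$ is principal, and here I would argue by contradiction, using that $s=\text{index}(R)$ means exactly $\delta_R(R/\mathfrak{m}^s)=1$. Suppose $J$ is not principal. As an ideal containing the nonzerodivisor $x$, the module $J$ is torsion-free with $x$ a regular element on it, so $\operatorname{depth}J\ge 1=\dim R$ and $J$ is maximal Cohen-Macaulay. Moreover an ideal containing a nonzerodivisor has a free direct summand if and only if it is principal (localizing at the minimal primes shows such an ideal has rank one, so a free summand would exhaust it). Hence the non-principality of $J$ means $J$ has no free direct summand.

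Now I would feed the hypotheses of the theorem---which are precisely those of Lemma 2.1 with this $s$---into that lemma to obtain
\[J/x\mathfrak{m}^s\;\cong\;R/\mathfrak{m}^s\;\oplus\;(\mathfrak{m}^{s+t-1},x)/xR.\]
In particular $R/\mathfrak{m}^s$ is a direct summand, and therefore a quotient, of $J/x\mathfrak{m}^s$; since $x\mathfrak{m}^s\subseteq xR\subseteq J$, the latter is a quotient of $J$. Composing yields a surjection $J\twoheadrightarrow R/\mathfrak{m}^s$ from a maximal Cohen-Macaulay module with no free summand. By the definition of Auslander's $\delta$-invariant (taking $X=J$ and $n=0$) this forces $\delta_R(R/\mathfrak{m}^s)=0$, contradicting $\delta_R(R/\mathfrak{m}^s)=1$. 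Therefore $J$ is principal, and the proof is complete.

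The heart of the argument---and the step I expect to require the most care---is the translation in the middle paragraph: recognizing that a non-principal $J$ is a maximal Cohen-Macaulay module \emph{without} a free summand, and then using the splitting from Lemma 2.1 to manufacture a surjection onto $R/\mathfrak{m}^s$ that is incompatible with $\text{index}(R)=s$. The surrounding bookkeeping---that $y$ is regular, that $x\mathfrak{m}^s\subseteq J$, and the rank-one computation identifying free summands with principality---is routine and should be verified but presents no real difficulty.
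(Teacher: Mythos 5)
Your proof is correct and follows essentially the same route as the paper: both apply Lemma 2.1 to exhibit $R/\mathfrak{m}^{s}$ as a quotient of $(\mathfrak{m}^{s+t-1},x)$, deduce that this ideal must be principal generated by a nonzerodivisor, and finish with Lemma 2.2. The only difference is that where the paper cites [10, Lemma 2.5] to pass from $\delta_{R}((\mathfrak{m}^{s+t-1},x))>0$ to the conclusion that this ideal is a parameter ideal, you supply a correct self-contained argument for that step (a maximal Cohen-Macaulay ideal with no free summand surjecting onto $R/\mathfrak{m}^{s}$ would force $\delta_{R}(R/\mathfrak{m}^{s})=0$, and a rank-one torsion-free ideal with a free summand is principal).
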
\text{}
\begin{proof} By Lemma 2.1, $(\mathfrak{m}^{s+t-1},x)/x\mathfrak{m}^{s} \cong R/\mathfrak{m}^{s} \oplus (\mathfrak{m}^{s+t-1},x)/xR$, so there is a surjection 
\[
(\mathfrak{m}^{s+t-1},x) \longrightarrow R/\mathfrak{m}^{s}.
\]Therefore, $\delta_{R}((\mathfrak{m}^{s+t-1},x))>0$. By [10, Lemma 2.5], $(\mathfrak{m}^{s+t-1},x)$ is a parameter ideal of $R$. Let $(\mathfrak{m}^{s+t-1},x)=(y)$, where $y \in \mathfrak{m}$ is a regular element. Since $\mathfrak{m}^{s+t-1} \subseteq (y)$, we have $\gll(R) \leq s+t-1$. If $\mathfrak{m}^{s+t-1}$ is not a principal ideal, then by Lemma 2.2 we have $\mathfrak{m}^{s+t-1} \subseteq (x)$. \end{proof}\text{} 
\begin{definition}
Let $R$ be a Cohen-Macaulay local ring with canonical module $\omega$. The {\it{trace}} of $\omega$ in $R$, denoted $\tau_{\omega}(R)$, is the ideal of $R$ generated by all $R$-homomorphic images of $\omega$ in $R$. 
\end{definition}\text{} 
\begin{corollary} Let $(R,\mathfrak{m})$ be a one-dimensional Cohen-Macaulay local ring with canonical module $\omega$ such that $\mathfrak{m} \subseteq \tau_{\omega}(R)$. Let $x \in \mathfrak{m}^{t} \setminus \mathfrak{m}^{t+1}$ such that $x^{*} \in \gr(R)$ is a regular element. Then \[\text{index}(R) \leq \gll(R) \leq \text{index}(R)+t-1.\]\end{corollary}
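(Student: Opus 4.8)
The plan is to obtain the two inequalities from separate sources and glue them together: the left-hand inequality is essentially Ding's result, while the right-hand one is Theorem~\ref{thm:ineq}. For the lower bound, I would invoke the standing hypothesis $\mathfrak{m} \subseteq \tau_{\omega}(R)$. By Ding's result [6, Proposition 2.4], quoted in the introduction, this guarantees both that $\text{index}(R)$ is \emph{finite} and that $\text{index}(R) \leq \gll(R)$. The finiteness is exactly the hypothesis needed to invoke Theorem~\ref{thm:ineq}, so this single step does double duty.

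For the upper bound, the key is to translate the one hypothesis that $x^{*} \in \gr(R)$ is a regular element into the two hypotheses required by Theorem~\ref{thm:ineq}: that $x$ is an $R$-nonzerodivisor, and that the induced maps $\overline{x}\colon \mathfrak{m}^{i-1}/\mathfrak{m}^{i} \longrightarrow \mathfrak{m}^{i+t-1}/\mathfrak{m}^{i+t}$ are injective for $1 \leq i \leq s$, where $s = \text{index}(R)$. The second is immediate once one observes that, since $x \in \mathfrak{m}^{t} \setminus \mathfrak{m}^{t+1}$, the initial form $x^{*}$ is homogeneous of degree $t$, and that $\overline{x}$ is precisely multiplication by $x^{*}$ restricted to the degree-$(i-1)$ component $\mathfrak{m}^{i-1}/\mathfrak{m}^{i}$ of $\gr(R)$. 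Regularity of $x^{*}$ makes this injective for every $i$, in particular for $1 \leq i \leq s$.

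For the first, I would argue that a homogeneous nonzerodivisor $x^{*}$ forces $x$ itself to be a nonzerodivisor of $R$. Indeed, if $xy = 0$ with $y \neq 0$, then by Krull's intersection theorem $y \in \mathfrak{m}^{j} \setminus \mathfrak{m}^{j+1}$ for some $j$, so the initial form $y^{*}$ is nonzero; yet $x^{*}y^{*}$ is the image of $xy = 0$ in $\mathfrak{m}^{t+j}/\mathfrak{m}^{t+j+1}$, hence zero, contradicting the regularity of $x^{*}$.

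With both hypotheses of Theorem~\ref{thm:ineq} verified, applying it yields $\gll(R) \leq \text{index}(R) + t - 1$, and combining this with the lower bound gives the stated chain of inequalities. I expect the only real subtlety to be the descent from regularity of $x^{*}$ on $\gr(R)$ to regularity of $x$ on $R$; once that standard initial-form fact is in hand, the remainder is a direct citation of Theorem~\ref{thm:ineq} and [6, Proposition 2.4].
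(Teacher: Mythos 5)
Your proposal is correct and follows the paper's own route exactly: the paper's proof is the one-line citation of [6, Proposition 2.4] for the lower bound (and finiteness of the index) together with Theorem 2.3 for the upper bound. Your additional verifications --- that regularity of $x^{*}$ on $\gr(R)$ gives both the injectivity of the induced maps $\overline{x}$ and the $R$-regularity of $x$ itself --- are the standard facts the paper leaves implicit, and they are argued correctly.
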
\begin{proof} This follows from [6, Proposition 2.4] and Theorem 2.3. \end{proof} \section{\large{Examples}} In this section we derive a formula for the generalized Loewy length of one-dimensional hypersurface rings and compute the generalized Loewy lengths of several families of examples of one-dimensional hypersurfaces. The associated graded ring of each of these hypersurface rings has a homogeneous nonzerodivisor of degree one or two, so the index and generalized Loewy length differ by at most one. \newline\newline Using techniques from the proof of [8, Example 3.2], we prove that for several families of hypersurfaces $\{R_{n}\}_{n=1}^{\infty}$,  
\[\gll(R_{n})-\text{index}(R_{n})=1\]\text{}\newline for $n \geq 1$. This difference is positive for each $n$ because of the absence of a regular linear form in certain one-dimensional hypersurface rings over finite fields.
\newline\newline 
Throughout this section, $S=k\llbracket x,y \rrbracket$, where $k$ is a field and $\mathfrak{n}=(x,y)S$. We say that the \textit{order} of an element $f \in S$ is $r$ if $f \in \mathfrak{n}^{r} \setminus \mathfrak{n}^{r+1}$, and write $\text{ord}_{S}(f)=r$. Let $R=S/(f)$, where $f \in \mathfrak{n}$. Let $\mathfrak{m}=(x,y)R$. The order of an element $z \in R$ is $r$ if $z \in \mathfrak{m}^{r} \setminus \mathfrak{m}^{r+1}$, and we write $\text{ord}_{R}(z)=r$. Recall that $\text{index}(R)=e(R)$. Finally, if $(R,\mathfrak{m})$ is any local ring of embedding dimension $n$, then $\mu_{R}(\mathfrak{m}^{r}) \leq \binom{n+r-1}{r}$.\newline\newline 
\begin{lemma} Let $R=k \llbracket x,y \rrbracket/(f)$, where $\text{ord}_{S}(f)=e$ and $g=\gll(R)$. Let $z \in \mathfrak{m}$ such that $\mathfrak{m}^{g} \subseteq (z)$ and $i \geq 0$. If $\gll(R) \leq e+i$, then $\text{ord}_{R}(z) \leq i+1$.
\end{lemma}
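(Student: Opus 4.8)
The plan is to reduce the claim to a statement about the socle degree (equivalently, the nilpotency index) of the Artinian quotient $R/(z)$, and then to prove that bound. Write $d=\text{ord}_R(z)$ and let $N$ be the least integer with $\mathfrak{m}^N\subseteq(z)$, i.e. the nilpotency index of the maximal ideal of $R/(z)$. Since $\mathfrak{m}^g\subseteq(z)$ gives $g\ge N$, and $\gll(R)=g\le e+i$, it suffices to prove
\[ N\ge d+e-1, \]
for then $d\le g-e+1\le i+1$. First I would record that $(z)$ is $\mathfrak{m}$-primary (it contains $\mathfrak{m}^g$), so $z$ is a nonzerodivisor and $(z)$ is a parameter ideal; hence $R/(z)$ is Artinian Gorenstein. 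I would also note that $\dim_k\mathfrak{m}^n/\mathfrak{m}^{n+1}=\min(n+1,e)$, since $\gr(R)\cong k[X,Y]/(\text{in}(f))$ with $\deg\text{in}(f)=e$.

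Next I would pass to $S=k\llbracket x,y\rrbracket$. Because $\text{ord}_R(z)=d$, one may choose a lift $\tilde z\in\mathfrak{n}^d\setminus\mathfrak{n}^{d+1}$ (a lift in $\mathfrak{n}^d$ exists as $z\in\mathfrak{m}^d$, and it cannot lie in $\mathfrak{n}^{d+1}$ without forcing $z\in\mathfrak{m}^{d+1}$). Then $R/(z)\cong S/(f,\tilde z)$, and the target inequality becomes
\[ \mathfrak{n}^{d+e-2}\not\subseteq(f,\tilde z). \]
Since $(f,\tilde z)$ is $\mathfrak{n}$-primary, $f,\tilde z$ is a regular sequence and $T:=S/(f,\tilde z)$ is an Artinian complete intersection, so its socle is one dimensional and lies in the top nonzero degree of the associated graded ring of $T$ with respect to its maximal ideal; thus the displayed non-containment says exactly that the socle degree of $T$ is at least $d+e-2$. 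Set $F=\text{in}(f)$ and $G=\text{in}(\tilde z)$ in $k[X,Y]$, of degrees $e$ and $d$.

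The clean case is when $F$ and $G$ are coprime (i.e. $z$ induces a regular initial form on $\gr(R)$, the situation of the main theorem of this section). Then $(F,G)$ is $\mathfrak{n}$-primary of colength $ed$, while $\ell(T)=\dim_k S/(f,\tilde z)=ed$ as well; since $(F,G)\subseteq\text{in}(f,\tilde z)$ and both have colength $ed$, equality holds and $k[X,Y]/(F,G)$ is the associated graded ring of $T$, a graded complete intersection of socle degree $(e-1)+(d-1)=d+e-2$. Hence $N=d+e-1$ and in fact $\text{ord}_R(z)=g-e+1$, giving the conclusion with room to spare. I would first extend scalars to assume $k$ infinite; this is harmless, as field extension is faithfully flat and preserves both the orders of $f,\tilde z$ and the containment in question.

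The remaining, and hardest, case is when $F$ and $G$ share a common factor, so $z^{*}$ is a zerodivisor in $\gr(R)$. Here $k[X,Y]/(F,G)$ is no longer the associated graded ring of $T$, and a length count alone is too weak: combining $\ell(R/(z))\le\ell(R/\mathfrak{m}^g)$ with $\ell(R/(z))\ge ed$ only yields $d\le g-\tfrac{e-1}{2}$. The expectation, confirmed on examples, is that a common factor only increases the socle degree, so $N\ge d+e-1$ persists; the key step is thus the characteristic-free bound that an Artinian complete intersection $S/(f,\tilde z)$ in a two-dimensional regular local ring has socle degree at least $\text{ord}(f)+\text{ord}(\tilde z)-2$. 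In characteristic zero this is immediate, as the socle is generated by the Jacobian $f_x\tilde z_y-f_y\tilde z_x$, which is nonzero and of order at least $(e-1)+(d-1)=d+e-2$; in positive characteristic the Jacobian can vanish, and this is the main obstacle. I would resolve it by analyzing the minimal free resolution of $\text{in}(f,\tilde z)$ via Hilbert–Burch — showing the top syzygy degree is at least $d+e$, using that the enlarged intersection multiplicity forces a high-degree minimal generator of the leading-form ideal — or, alternatively, by a specialization argument from the coprime case, exploiting that the socle degree is non-decreasing as the leading forms acquire a common factor.
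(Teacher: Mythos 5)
Your reduction to the inequality $N \ge d+e-1$ for the Loewy length $N$ of $R/(z)$ is correct, and your treatment of the case where $F=\text{in}(f)$ and $G=\text{in}(\tilde z)$ are coprime is complete (length $ed$ on both sides forces $\text{in}(f,\tilde z)=(F,G)$, a graded complete intersection with top socle degree $d+e-2$). But the lemma is not proved: the case where $F$ and $G$ share a common factor is precisely the case the paper needs --- Lemma 3.1 is invoked in Lemma 3.4 with no regularity hypothesis on $z^{*}$, and the whole point of Section 3 is finite residue fields, so positive characteristic cannot be set aside --- and there you offer only two unexecuted programs. The Jacobian argument is conceded to fail in characteristic $p$; the assertion that "the socle degree is non-decreasing as the leading forms acquire a common factor" is an unproved semicontinuity claim, not an argument; and the Hilbert--Burch analysis of $\text{in}(f,\tilde z)$ is not carried out (note that $\text{in}(f,\tilde z)$ is in general not a complete intersection in this case, which is exactly why your length count degrades to $d\le g-(e-1)/2$). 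So the key step for the intended applications is missing, and as written your proposal establishes the lemma only under the additional hypothesis that $z^{*}$ is $\gr(R)$-regular.

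For comparison, the paper's own proof is a single dimension count with no case distinction and no appeal to Gorenstein duality: lifting $z$ to $\zeta$ with $\text{ord}_{S}(\zeta)=r$, the containment $\mathfrak{n}^{g}\subseteq(f,\zeta)$ forces each of the $g+1$ independent forms of degree $g$ to be a leading form of degree $g$ of an element of $(f,\zeta)$, and the paper bounds the space of such leading forms by $(g-e+1)+(g-r+1)=2g-(e+r)+2$, coming from $\text{in}(f)\cdot S_{g-e}$ and $\text{in}(\zeta)\cdot S_{g-r}$; comparing dimensions gives $e+r\le g+1$ at once. The delicate point in that count is whether cancellation of leading terms can produce initial forms of degree $g$ outside $(\text{in}(f),\text{in}(\zeta))$ --- the same phenomenon you isolate as the obstacle --- so if you pursue your route, controlling the degree-$g$ piece of the leading-form ideal of $(f,\zeta)$ is where your argument and the paper's meet, and is what still has to be supplied.
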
 
\begin{proof} Let $\text{ord}_{R}(z)=r$ and $\zeta \in \mathfrak{n}^{r} \setminus \mathfrak{n}^{r+1}$ such that $\overline{\zeta}=z$. Then $\mathfrak{n}^{g} \subseteq (f, \zeta)$. Let $M$ be the $k$-vector space of leading forms of degree $g$ of elements of $(f,\zeta)$. Since $\text{ord}_{S}(\zeta)=r$, we obtain leading forms of degree $g$ from this element by multiplying $\zeta$ by generators of $\mathfrak{n}^{g-r}$. Similarly, we multiply $f$ by generators of $\mathfrak{n}^{g-e}$ to obtain leading forms of degree $g$. Therefore,\newline 
\[\text{dim}_{k}\,M \leq \binom{2+(g-e)-1}{g-e}+\binom{2+(g-r)-1}{g-r}=2g-(e+r)+2. 
\]\text{} \newline
On the other hand, the vector space of forms of degree $g$ in $\mathfrak{n}^{g}$ has dimension $g+1$. Therefore, $g+1 \leq 2g-(e+r)+2$ \,\,and\,\, $e+r \leq g+1$. The result follows from this inequality. \end{proof}\text{}
\begin{definition}
Let $(R,\mathfrak{m})$ be a $d$-dimensional local ring. A system of parameters\, $\underline{\bf{x}}=x_1,...,x_d \in \mathfrak{m}$ is a {\it{witness to}} $g=\gll(R)$ if $\mathfrak{m}^g \subseteq (\underline{\bf{x}})$.
\end{definition}
If $R$ is a one-dimensional hypersurface with a witness $z$ to $\gll(R)$ that induces a regular initial form on $\gr(R)$, then we can compute $\gll(R)$ using the following formula. We see that the order of $z$ is uniquely determined by $\gll(R)$ and $e(R)$.
\begin{proposition}
 Let $R=k \llbracket x,y \rrbracket/(f)$, where $\text{ord}_{S}(f)=e$ and $z \in \mathfrak{m}$ such that $z^{*}$ is $\gr(R)$-regular. If $z$ is a witness to $\gll(R)$, then  \[\gll(R)=\text{ord}_{R} (z)+e-1.\]
\end{proposition}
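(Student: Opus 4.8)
The plan is to sandwich $g := \gll(R)$ between two matching bounds. Write $r = \text{ord}_{R}(z)$ and recall that, since $R$ is a hypersurface, $\text{index}(R) = e(R) = e$. Because $z^{*}$ is $\gr(R)$-regular of degree $r$, the element $z$ is a nonzerodivisor lying in $\mathfrak{m}^{r} \setminus \mathfrak{m}^{r+1}$, and $R$, being a one-dimensional complete intersection, is Gorenstein; hence it is Cohen-Macaulay with canonical module $\omega \cong R$ satisfying $\mathfrak{m} \subseteq \tau_{\omega}(R) = R$. Thus the hypotheses of Corollary 2.5 hold with $t = r$.

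First I would apply Corollary 2.5, which yields $\text{index}(R) \le \gll(R) \le \text{index}(R) + r - 1$, that is, $e \le g \le e + r - 1$. The right-hand inequality is exactly the upper bound $g \le \text{ord}_{R}(z) + e - 1$ that we seek. The left-hand inequality $g \ge e$ is recorded separately, since it is what will legitimize the application of Lemma 3.1 in the next step.

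For the reverse inequality I would invoke Lemma 3.1. Set $i = g - e$; by the previous paragraph $i \ge 0$, and the witness hypothesis $\mathfrak{m}^{g} \subseteq (z)$ together with the tautology $\gll(R) = g \le e + i$ place us precisely in the setting of Lemma 3.1. Its conclusion $\text{ord}_{R}(z) \le i + 1$ then reads $r \le g - e + 1$, equivalently $g \ge e + r - 1$.

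Combining the two bounds gives $g = e + r - 1 = \text{ord}_{R}(z) + e(R) - 1$, as claimed. The argument is a direct double inequality, and the only subtlety is ensuring that $i = g - e$ is a nonnegative (hence admissible) choice in Lemma 3.1; this is exactly why the bound $\text{index}(R) \le \gll(R)$ coming from Corollary 2.5 is extracted first. I therefore do not anticipate a genuine obstacle beyond carefully matching the hypotheses of the two cited results to the present setup.
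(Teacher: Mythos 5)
Your proposal is correct and follows essentially the same route as the paper: the upper bound $\gll(R)\leq e+\operatorname{ord}_R(z)-1$ comes from Theorem 2.3 (your Corollary 2.5 is just that theorem packaged with the Gorenstein lower bound) and the reverse inequality comes from Lemma 3.1 applied with $i=g-e$. Your explicit check that $g-e\geq 0$ is a point the paper leaves implicit, but it is the same argument.
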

\begin{proof} Recall that $\text{index}(R)=e$. Let $g=\gll(R)$ and $n=g-e$. Then \,$g=e+n$\, and by Lemma 3.1, $\text{ord}_{R}(z) \leq n+1$. By Theorem 2.3,\, $g \leq e+\text{ord}_{R}(z)-1 \leq e+n=g$.
\end{proof}\text{}\newline 
If we cannot find an element of a one-dimensional hypersurface that is a witness to $\gll(R)$ and induces a regular form on $\gr(R)$, then we can use the following lemma to estimate the generalized Loewy length. \newline  
\begin{lemma} Let $R=k \llbracket x,y \rrbracket/(f)$, where \,$\text{ord}_{S}(f)=e \geq 2$. If $R$ has no nonzerodivisors of the form $\alpha x+\beta y$, where $\alpha,\beta \in k$, then $\gll(R)>e.$\text{}\end{lemma}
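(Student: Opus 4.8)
The plan is to argue by contradiction. Since $R$ is a hypersurface it is Gorenstein, so by Ding's bound $\text{index}(R)\le\gll(R)$ together with the identity $\text{index}(R)=e(R)=e$ we already have $\gll(R)\ge e$; hence it suffices to rule out $\gll(R)=e$. So suppose $\gll(R)\le e$ and let $z\in\mathfrak{m}$ be a witness, so that $\mathfrak{m}^{\gll(R)}\subseteq(z)$ with $z$ a nonzerodivisor; as $\gll(R)\le e$ this gives $\mathfrak{m}^{e}\subseteq(z)$. Applying Lemma 3.1 with $i=0$ forces $\text{ord}_{R}(z)\le 1$, and since $z\neq 0$ we get $\text{ord}_{R}(z)=1$. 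The goal is then to show that the linear leading form of such a $z$ must be a nonzerodivisor in $R$, contradicting the hypothesis.

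First I would record the dictionary between zerodivisors and divisibility. Using that $S=k\llbracket x,y\rrbracket$ is a UFD and that a linear form $\ell=\alpha x+\beta y$ is prime, one checks that $\ell$ is a zerodivisor in $R=S/(f)$ if and only if $\ell\mid f$ in $S$: if $\ell\nmid f$ then $\gcd(\ell,f)=1$ and $fg\in(f)$ with $f\mid\ell g$ forces $f\mid g$, while if $\ell\mid f$ then $f/\ell\notin(f)$ witnesses the zerodivisor. Moreover $\ell\mid f$ implies $\ell\mid f^{*}$, since orders add and leading forms multiply in the domain $S$. Thus the hypothesis, which says every linear form is a zerodivisor, is equivalent to saying every linear form divides $f$, and in particular divides the leading form $f^{*}$.

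Now lift $z$ to $\zeta\in S$ with $\text{ord}_{S}(\zeta)=1$, and let $\ell_{0}=\zeta^{*}$ be its linear leading form; regularity of $z$ gives $\zeta\nmid f$. By the previous paragraph $\ell_{0}\mid f^{*}$. After a linear change of coordinates on $S$ (which preserves all hypotheses, being intrinsic to $R$) I may assume $\ell_{0}=x$, so $\zeta=x+(\text{order}\ge 2)$ and $f^{*}=x\,h$ for a form $h$ of degree $e-1$. The key computation is to pass to the DVR $\bar S=S/(\zeta)\cong k\llbracket t\rrbracket$: solving $\zeta=0$ expresses $x$ as a power series in $y$ of order $\ge 2$, so $\bar y$ is a uniformizer $t$ while $\bar x$ has order $\ge 2$. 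The containment $\mathfrak{m}^{e}\subseteq(z)$ lifts to $\mathfrak{n}^{e}\subseteq(\zeta,f)$, and since $\pi(\mathfrak{n})=(t)$ and $\pi(\zeta)=0$, applying $\pi\colon S\to\bar S$ yields $(t^{e})=\pi(\mathfrak{n}^{e})\subseteq(\pi(f))$. The crux is that because $x\mid f^{*}$, every homogeneous piece $f_{i}$ with $i\ge e$ maps into $(t^{e+1})$: the degree-$e$ piece $f^{*}=xh$ has no pure $y^{e}$-term and so maps to order $\ge(e-1)+2=e+1$, while each higher piece $f_{i}$ has $\text{ord}_{t}\bigl(f_{i}(\bar x,\bar y)\bigr)\ge i\ge e+1$. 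Hence $\text{ord}_{t}(\pi(f))\ge e+1$, forcing $(t^{e})\subseteq(t^{e+1})$, which is absurd.

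The main obstacle is precisely this order computation in $\bar S$: showing that a shared tangent direction $\ell_{0}\mid f^{*}$ pushes $\text{ord}_{t}(\pi(f))$ strictly above $e$. Equivalently, it is the statement that $\text{length}(R/zR)=\dim_{k}S/(\zeta,f)$ can equal $e$ only when the leading form of the parameter is transverse to the tangent cone of $f$; this is the plane-curve fact that the intersection multiplicity of $\zeta$ and $f$ equals the product of their orders exactly when they share no tangent. I would present it through the explicit substitution above rather than quoting intersection theory, and I expect the reduction to $\text{ord}_{R}(z)=1$ and the zerodivisor/divisibility dictionary to be routine by comparison.
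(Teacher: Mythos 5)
Your proof is correct and follows essentially the same route as the paper: reduce to $\text{ord}_R(z)=1$ via Lemma 3.1, change coordinates so the linear leading form of the lift $\zeta$ is $x$, use the zerodivisor hypothesis to see that $x$ divides $f$, and pass to the DVR $S/(\zeta)$ to find that the image of $f$ has order at least $e+1$, contradicting $\mathfrak{m}^e\subseteq(z)$. The only cosmetic difference is that the paper phrases the final contradiction as two incompatible computations of $\ell_R(R/(z))$ and uses the full factorization $f=xg$ rather than your monomial-by-monomial order count from $x\mid f^{*}$.
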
\begin{proof} Since $\text{index}(R)=e$, we have $e \leq \gll(R)$ [6, Proposition 2.4]. Suppose $\gll(R)=e$. Let $z \in \mathfrak{m}$ such that $\mathfrak{m}^{e} \subseteq (z)$. By Lemma 3.1, we have $\text{ord}_{R}(z)=1$. Let $\zeta \in \mathfrak{n} \setminus \mathfrak{n}^2$ be a preimage of $z$. Note that for each invertible matrix $\begin{pmatrix}
a & b \\
c & d 
\end{pmatrix} \in \text{GL}_2(k)$, the map $\phi:S \longrightarrow S$ defined by $\phi(x)=ax+by$ and $\phi(y)=cx+dy$ is a $k$-algebra automorphism. Letting an appropriate invertible matrix in $\text{GL}_{2}(k)$ act on $S$, we may assume without loss of generality that $\zeta=x-h$ for some nonzero element $h \in S$ with $\text{ord}_{S}(h) \geq 2$. Since $x$ is a zerodivisor on $R$, there is an element $g \in \mathfrak{n}^{e-1}$ such that $f=xg$. \newline\newline
Let $R'=S/(\zeta)$. Since $S$ is a regular local ring and $\text{ord}_{S}(\zeta)=1$, it follows that $R'$ is a one-dimensional regular local ring, and thus a discrete valuation ring. Let $\overline{f}$ denote the image of $f$ in $R'$. Then 
\[R/(z) \cong R'/(\overline{f}).\]\text{} \newline
Since $\overline{g} \in (x,y)^{e-1}R'$ and $\overline{x}=\overline{h} \in (x,y)^2R'$, it follows that $\overline{f} \in (x,y)^{e+1}R'$, so
$l_{R'}(R'/(\overline{f}))=\text{ord}_{R'}(\overline{f}) \geq e+1$  and  $l_{R}(R/(z)) \geq e+1$. Now let $R_{1}:=R/(z)$\, and $\mathfrak{m}_{1}:=\mathfrak{m}/(z)$.
Then \newline  
\[0=\mathfrak{m}_{1}^{e} \subseteq \mathfrak{m}_{1}^{e-1} \subseteq \cdot\cdot\cdot \subseteq \mathfrak{m}_{1} \subseteq R_{1}\]\text{}\newline is a composition series for $R_{1}$, so\, $l_{R}(R/(z))=e$. This is a contradiction.\end{proof} \text{}\newline 
If $(R,\mathfrak{m},k)$ is a one-dimensional local ring with Cohen-Macaulay associated graded ring and infinite residue field, then $\gr(R)$ has a homogeneous linear nonzerodivisor. We now consider one-dimensional hypersurface rings with finite residue field such that the associated graded ring does not have a homogeneous linear nonzerodivisor. If the associated graded ring has a homogeneous quadratic nonzerodivisor, then it follows from Theorem 2.3 and Lemma 3.4 that the difference between the generalized Loewy length and index is one. \newline 
\begin{proposition} Let $k$ be a finite field and $R=k\llbracket x,y \rrbracket/y(\prod\limits_{\alpha \in k}(x+\alpha y))$. Then \[\gll(R)=\text{index}(R)+1=|k|+2.\] \end{proposition}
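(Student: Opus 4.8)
The plan is to sandwich $\gll(R)$ between $|k|+2$ and $|k|+2$, obtaining the lower bound from Lemma 3.4 and a matching upper bound from Theorem 2.3. First I would record the numerics. Writing $q=|k|$, the defining polynomial $f=y\prod_{\alpha\in k}(x+\alpha y)$ is a product of $q+1$ distinct linear forms, hence homogeneous of degree $q+1$; thus $\text{ord}_S(f)=q+1$ and $\text{index}(R)=e(R)=q+1=|k|+1$. Because $f$ is homogeneous, the leading-form ideal equals $(f)$, so $\gr(R)\cong k[x,y]/(f)$, the coordinate ring of the union of all $q+1$ lines through the origin in $\mathbb{A}^2_k$.

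For the lower bound, the key observation is that the factors of $f$ are, up to scalar, all the nonzero linear forms over $k$: every $\alpha x+\beta y$ with $(\alpha,\beta)\neq(0,0)$ is a scalar multiple of either $y$ or some $x+\gamma y$. Consequently every linear form divides $f$ and is therefore a zerodivisor in $R$, so $R$ has no nonzerodivisor of the form $\alpha x+\beta y$. Since $e=|k|+1\geq 2$, Lemma 3.4 yields $\gll(R)>e$, that is, $\gll(R)\geq |k|+2$.

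For the upper bound I would produce a homogeneous $\gr(R)$-regular element of degree $2$ and invoke Theorem 2.3 with $t=2$. In $\gr(R)=k[x,y]/(f)$, a binary form $Q$ is a nonzerodivisor precisely when it shares no irreducible factor with $f$; since $f$ is the product of all linear forms over $k$, this holds iff $Q$ has no linear factor over $k$, i.e. iff $Q$ is anisotropic. Over a finite field such forms exist: homogenizing the minimal polynomial of a generator of the quadratic extension $\mathbb{F}_{q^2}/\mathbb{F}_q$ gives an irreducible binary quadratic $Q$ with no root in $\mathbb{P}^1(k)$. Lifting $Q$ to $z\in\mathfrak{m}^2\setminus\mathfrak{m}^3$ with $z^{*}=Q$, the regularity of $z^{*}$ in $\gr(R)$ forces $z$ to be an $R$-nonzerodivisor and makes each induced map $\overline{z}\colon \mathfrak{m}^{i-1}/\mathfrak{m}^{i}\to\mathfrak{m}^{i+1}/\mathfrak{m}^{i+2}$ injective (this map is just multiplication by $z^{*}$ in $\gr(R)$), so Theorem 2.3 applies and gives $\gll(R)\leq \text{index}(R)+2-1=|k|+2$. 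Combining the two bounds yields $\gll(R)=|k|+2=\text{index}(R)+1$.

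The main obstacle is the upper bound, specifically the existence of a quadratic nonzerodivisor on $\gr(R)$: since $f$ absorbs every $k$-rational linear form, no linear element can serve, and one must pass to a genuinely quadratic anisotropic form. This is exactly the point where the finiteness of $k$ enters twice with opposite effects—finiteness is what allows $f$ to exhaust all linear forms (defeating a degree-one witness and forcing $\gll>\text{index}$), yet the existence of $\mathbb{F}_{q^2}$ over a finite field is what guarantees the degree-two witness that caps $\gll$ at $\text{index}+1$.
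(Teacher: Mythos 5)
Your proposal is correct and follows essentially the same route as the paper: the lower bound comes from Lemma 3.4 after noting that the factors of $f$ exhaust all $k$-rational linear forms, and the upper bound comes from Theorem 2.3 applied to the degree-two element obtained by homogenizing an irreducible quadratic over $k$ (the paper verifies its regularity on $\gr(R)$ by the same coprimality-of-factors argument you phrase in UFD language). No gaps; the argument matches the paper's.
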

\begin{proof} We construct a homogeneous nonzerodivisor of degree $2$ in $\gr(R)$. Let $f \in k[x]$ be a degree 2 irreducible polynomial. Define
\[g(x,y) \in k[x,y] \,\,\,by\,\,\, g(x,y):=y^2f(\frac{x}{y}).\]\text{}\newline
We claim that the element $\overline{g}=g(\overline{x},\overline{y}) \in \gr(R)=k[x,y]/y(\prod\limits_{\alpha \in k}(x+\alpha y))$ is $\gr(R)$-regular. \newline\newline Let $h \in k[x,y]$ such that $\overline{g}\overline{h}=\overline{0}$. Then there exists a polynomial $p(x,y) \in k[x,y]$ such that \newline
\[
gh=py(\prod\limits_{\alpha \in k}(x+\alpha y)). 
\]\text{}\newline
Let $\alpha \in k$. Suppose $(x+\alpha y) \mid g$ and $q(x,y) \in k[x,y]$ such that $(x+\alpha y)q(x,y)=g(x,y)$. Then $(x+\alpha)q(x,1)=g(x,1)=f(x)$. This contradicts the irreducibility of $f$. \newline\newline It follows that $(x+\alpha y) \mid h$. Clearly $y \nmid g$, so $y \mid h$ as well, and $y(\prod\limits_{\alpha \in k}(x+\alpha y)) \mid h$. Therefore we have $\overline{h}=\overline{0}$, and $\overline{g}$ is $\gr(R)$-regular. By Theorem 2.3 and Lemma 3.4, $\gll(R) = \text{index}(R)+1$. \end{proof} \text{}\newline
\begin{remark} When $k=\mathbb{F}_{2}$, Proposition 3.5 is Hashimoto and Shida's counterexample to Ding's conjecture: $\mathbb{F}_{2}\llbracket x,y \rrbracket/(xy(x+y))$. In the following propositions, we compute the generalized Loewy lengths of families of one-dimensional hypersurface rings of the form $k\llbracket x,y \rrbracket/(xy(x^{n}+y^{n}))$, where $k$ is a finite field and $n$ is a positive integer.\end{remark}\text{}
\begin{proposition} Let $n \geq 1$ and $k$ a field such that $\text{char}\,k \neq 2$ and $\text{char}\,k \nmid 1+(-2)^{n}$. Let  $R=k\llbracket x,y\rrbracket/(xy(x^n+y^n))$. Then $\mathfrak{m}^{n+2}=(x+2y)\mathfrak{m}^{n+1}$ and 
\[\gll(R)=\text{index}(R)=n+2.
\] \end{proposition}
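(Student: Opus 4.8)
The plan is to exhibit the single linear form $z=x+2y$ as a system of parameters whose presence absorbs $\mathfrak{m}^{n+2}$, carrying out the computation on the associated graded ring where everything is homogeneous. Since $f=xy(x^{n}+y^{n})$ is homogeneous of degree $n+2$, we have $\text{ord}_S(f)=n+2$, so $\text{index}(R)=e(R)=n+2$ by the recollection of Section 3, and homogeneity of $f$ identifies $\gr(R)$ with the standard graded ring $A:=k[x,y]/(f)$, under which $\mathfrak{m}^{j}/\mathfrak{m}^{j+1}\cong A_{j}$.

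First I would verify that $z=x+2y$ is a nonzerodivisor of $A$ (equivalently of $R$). Because $k[x,y]$ is a UFD, a form is $A$-regular exactly when it shares no irreducible factor with $f$, i.e.\ when $z\nmid f$. The form $z$ differs from the factors $x$ and $y$ precisely because $\text{char}\,k\neq 2$ (so $z\neq x$), and $z\nmid(x^{n}+y^{n})$ because substituting $x=-2y$ gives $((-2)^{n}+1)y^{n}$, which is nonzero exactly under the hypothesis $\text{char}\,k\nmid 1+(-2)^{n}$. Hence $z\nmid f$ and $z$ is $A$-regular; this is the only place the two numerical hypotheses enter.

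The crux is the graded identity $A_{n+2}=z\,A_{n+1}$, and it is the main obstacle, with the right idea being to reduce the local equality to a dimension count on $\gr(R)$ and then lift by Nakayama. I would first record the Hilbert function: the sole relation $f$ lies in degree $n+2$, so $\dim_{k}A_{j}=j+1$ for $j\le n+1$ and $\dim_{k}A_{n+2}=(n+3)-1=n+2$, whence $\dim_{k}A_{n+1}=\dim_{k}A_{n+2}=n+2$. Multiplication by the regular form $z$ is an injective $k$-linear map $A_{n+1}\to A_{n+2}$ between spaces of equal finite dimension, hence surjective, giving $A_{n+2}=z\,A_{n+1}$. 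Translating back to $R$, this reads $\mathfrak{m}^{n+2}=z\,\mathfrak{m}^{n+1}+\mathfrak{m}^{n+3}$; writing $M=\mathfrak{m}^{n+2}$ and $N=z\,\mathfrak{m}^{n+1}\subseteq M$ and using $\mathfrak{m}^{n+3}=\mathfrak{m}M$, we obtain $M=N+\mathfrak{m}M$, so Nakayama's lemma forces $M=N$. This is the first claim $\mathfrak{m}^{n+2}=(x+2y)\mathfrak{m}^{n+1}$.

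Finally, $\mathfrak{m}^{n+2}=(x+2y)\mathfrak{m}^{n+1}\subseteq(x+2y)$ with $x+2y$ regular, so $x+2y$ is a system of parameters witnessing $\gll(R)\le n+2$. Since $R$ is a hypersurface, hence Gorenstein, $\text{index}(R)\le\gll(R)$ by [6, Proposition 2.4]; combined with $\text{index}(R)=n+2$ this yields $\gll(R)=\text{index}(R)=n+2$.
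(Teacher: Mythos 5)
Your proof is correct, but it takes a genuinely different route from the paper's. The paper argues entirely by hand with the monomial generators of $(x+2y)\mathfrak{m}^{n+1}$: it forms the telescoping combination $\sum_{i=1}^{n}(-2)^{i-1}z_{i}=(1+(-2)^{n})x^{n+1}y$ using the relation $xy^{n+1}=-x^{n+1}y$, and the arithmetic hypothesis enters as the invertibility of the scalar $1+(-2)^{n}$, after which all degree-$(n+2)$ monomials are absorbed one at a time. You instead prove that $x+2y$ is a \emph{linear nonzerodivisor} on $\gr(R)=k[x,y]/(f)$ -- with the hypotheses $\operatorname{char}k\neq 2$ and $\operatorname{char}k\nmid 1+(-2)^{n}$ appearing as the divisibility conditions $z\neq x$ and $z\nmid x^{n}+y^{n}$ -- and then get surjectivity of $\cdot z\colon A_{n+1}\to A_{n+2}$ for free from injectivity plus the Hilbert-function count $\dim_{k}A_{n+1}=\dim_{k}A_{n+2}=n+2$, finishing with Nakayama. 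Your version is more conceptual and situates the result squarely in the framework of the rest of the paper: once you know $\gr(R)$ has a degree-one regular element you could simply invoke Theorem 2.3 with $t=1$ to conclude $\gll(R)\leq\operatorname{index}(R)$, so your dimension count is really a self-contained re-proof of that case; it also makes transparent exactly why both characteristic hypotheses are needed, and explains why these $R$ \emph{satisfy} Ding's equality while the neighboring examples (Propositions 3.5 and 3.12, where no linear regular form exists) do not. What the paper's computation buys in exchange is the explicit generating identity exhibiting each monomial of $\mathfrak{m}^{n+2}$ inside $(x+2y)\mathfrak{m}^{n+1}$ without any appeal to regularity or to the structure of $\gr(R)$.
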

\begin{proof} Since $\mathfrak{m}^{n+1}$ is generated by $\{x^{n+1-i}y^{i} \}_{i=0}^{n+1}$, it follows that
$(x+2y)\mathfrak{m}^{n+1}$ is generated by $\{x^{n+2-i}y^{i}+2x^{n+1-i}y^{i+1}\}_{i=0}^{n+1}$.
Let \[z_{i}=x^{n+2-i}y^{i}+2x^{n+1-i}y^{i+1}\]\newline for $0 \leq i \leq n+1$. Since $xy^{n+1}=-x^{n+1}y$, \newline
\[\sum_{i=1}^{n} (-2)^{i-1}z_{i}=x^{n+1}y+2(-2)^{n-1}xy^{n+1}
\]
\[=x^{n+1}y-2(-2)^{n-1}x^{n+1}y
\] 
\[=(1+(-2)^{n})x^{n+1}y.
\]\text{} \newline Since $z_i \in (x+2y)\mathfrak{m}^{n+1}$ for\, $0 \leq i \leq n+1$, we have\, $x^{n+1}y \in (x+2y)\mathfrak{m}^{n+1}$ and $\mathfrak{m}^{n+2} \subseteq (x+2y)\mathfrak{m}^{n+1}$. Therefore, $\gll(R) \leq n+2=\text{index}(R) \leq \gll(R)$. \end{proof}\text{}
\begin{corollary} Let $k$ be a field of characteristic $p>2$ and $R=k\llbracket x,y \rrbracket /(xy(x^{p^{n}}+y^{p^{n}}))$, where $n \geq 0$. Then $\mathfrak{m}^{p^{n}+2}=(x+2y)\mathfrak{m}^{p^{n}+1}$, and 
\[\gll(R)=\text{index}(R)=p^{n}+2.
\] \end{corollary}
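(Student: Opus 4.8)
The plan is to obtain this corollary as an immediate specialization of Proposition 3.7, applied with the exponent $n$ there replaced by $p^n$. Proposition 3.7 imposes two hypotheses on the field: that $\text{char}\,k \neq 2$, and that $\text{char}\,k \nmid 1 + (-2)^{m}$, where $m$ denotes the exponent appearing in the defining relation $xy(x^m + y^m)$. Here $m = p^n$, so I first observe that the standing assumption $\text{char}\,k = p > 2$ immediately supplies $\text{char}\,k \neq 2$, and also guarantees $p^n \geq 1$, so that the requirement $m \geq 1$ in Proposition 3.7 is met even in the boundary case $n = 0$.

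The only genuine verification is the divisibility condition $p \nmid 1 + (-2)^{p^n}$, and I would establish it using Fermat's little theorem. For any integer $a$ one has $a^p \equiv a \pmod{p}$; iterating the Frobenius (the $p$-power map) $n$ times then gives $a^{p^n} \equiv a \pmod{p}$. Taking $a = -2$ yields $(-2)^{p^n} \equiv -2 \pmod{p}$, and hence $1 + (-2)^{p^n} \equiv -1 \pmod{p}$. Since $-1$ is never divisible by a prime, we conclude $p \nmid 1 + (-2)^{p^n}$ for every $p > 2$ and every $n \geq 0$.

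With both hypotheses confirmed, Proposition 3.7 applies verbatim and delivers $\mathfrak{m}^{p^n + 2} = (x + 2y)\mathfrak{m}^{p^n + 1}$ together with $\gll(R) = \text{index}(R) = p^n + 2$. There is no substantial obstacle here beyond the elementary number-theoretic step; the essential point is simply recognizing that the Frobenius fixes $-2 \in \mathbb{F}_p$, which forces the characteristic-divisibility hypothesis of the proposition to hold automatically across the entire family.
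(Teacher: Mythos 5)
Your proposal is correct and follows essentially the same route as the paper: both reduce the corollary to Proposition~3.7 and verify the hypothesis $p \nmid 1+(-2)^{p^{n}}$ via Fermat's little theorem (the paper writes $(-2)^{p^{n}}=-2^{p^{n}}$ and argues by contradiction from $2^{p^{n}}\equiv 2 \pmod{p}$, whereas you apply the Frobenius directly to $-2$ to get $1+(-2)^{p^{n}}\equiv -1 \pmod{p}$, a minor streamlining of the same computation).
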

\begin{proof} Suppose $p \mid 1+(-2)^{p^{n}}$. Since $1+(-2)^{p^{n}}=1-2^{p^{n}}$, we have $2^{p^{n}}=1 \text{ mod }p$. Since $2^{p^{n}}=2 \text{ mod }p$, it follows that $2=1 \text{ mod }p$, which is false. Therefore, $p \nmid 1+(-2)^{p^{n}}$. The result now follows from Proposition 3.7.\end{proof} \text{}\newline 
If we let $p=2$ in Corollary 3.8, then the generalized Loewy length and index of $R$ differ by one. This is a special case of Proposition 3.12. To prove Proposition 3.12, we require the following results about the reducibility of cyclotomic polynomials modulo prime integers and primitive roots of powers of prime integers. \newline
\begin{lemma}[12, Theorem 2.47] Let $K=\mathbb{F}_{q}$, where $q$ is prime and $q \nmid n$. Let $\varphi$ denote Euler's totient function and $d$ the least positive integer such that $q^{d}=1 \text{ mod } n$. Then the $n^{\text{th}}$ cyclotomic polynomial \,$\Phi_{n}$ factors into $\varphi(n)/d$ distinct monic irreducible polynomials in $K[x]$ of degree $d$.  
\end{lemma}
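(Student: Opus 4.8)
The plan is to prove this by the standard finite-field argument (this is the Lidl--Niederreiter result cited as [12, Theorem 2.47]), resting on the cyclic structure of the multiplicative group of a finite field and the action of the Frobenius automorphism on roots of unity. There are three pieces to establish: that the factors are distinct, that each has degree exactly $d$, and that there are $\varphi(n)/d$ of them.

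First I would dispose of separability, which gives the \emph{distinct} claim. Since $q \nmid n$, the element $n$ is a unit in $\mathbb{F}_q$, so the formal derivative of $x^n-1$ is $n x^{n-1}$, whose only root is $0$; as $0$ is not a root of $x^n-1$, the two polynomials are coprime and $x^n-1$ has $n$ distinct roots in a splitting field. Because $\Phi_n \mid x^n-1$, the cyclotomic polynomial $\Phi_n$ is squarefree, hence its monic irreducible factors over $\mathbb{F}_q$ are pairwise distinct.

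Next, the heart of the argument. The roots of $\Phi_n$ are precisely the $\varphi(n)$ elements of multiplicative order exactly $n$ in a splitting field. Fix such a primitive $n$-th root of unity $\zeta$. I would compute the degree of its minimal polynomial over $\mathbb{F}_q$ as the size of its orbit under the Frobenius map $\sigma\colon \alpha \mapsto \alpha^q$; that orbit is $\zeta, \zeta^q, \zeta^{q^2}, \dots$, so the degree is the least positive integer $e$ with $\sigma^e(\zeta)=\zeta$, i.e.\ $\zeta^{q^e}=\zeta$. Since $\zeta$ has order exactly $n$, this condition is equivalent to $\zeta^{q^e-1}=1$, that is $q^e \equiv 1 \pmod{n}$, whose least solution is $d$ by definition. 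Hence the irreducible factor of $\Phi_n$ having $\zeta$ as a root has degree exactly $d$, and as $\zeta$ was an arbitrary primitive $n$-th root of unity, every irreducible factor of $\Phi_n$ has degree $d$.

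Finally, comparing degrees closes the count: $\Phi_n$ has degree $\varphi(n)$, and it is a product of distinct monic irreducibles each of degree $d$, so there are exactly $\varphi(n)/d$ of them (and in particular $d \mid \varphi(n)$). I expect the main obstacle to be the middle step, namely verifying $[\mathbb{F}_q(\zeta):\mathbb{F}_q]=d$; the care needed there is to reconcile the Frobenius-orbit computation with the field-containment criterion, namely that $\zeta \in \mathbb{F}_{q^e}$ iff the cyclic group $\mathbb{F}_{q^e}^{*}$ of order $q^e-1$ contains an element of order $n$, i.e.\ iff $n \mid q^e-1$. Both formulations reduce to the same congruence $q^e \equiv 1 \pmod{n}$, so they agree and pin the degree to $d$.
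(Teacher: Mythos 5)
Your proof is correct and complete: separability from $q \nmid n$, the Frobenius-orbit computation pinning each irreducible factor's degree to $d$, and the degree count $\varphi(n)/d$ are exactly the standard argument. The paper itself gives no proof of this lemma --- it is quoted directly from the cited reference [12, Theorem 2.47] --- and your argument is the one found there, so there is nothing further to reconcile.
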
 \text{} \begin{lemma}[12, Example 2.46] Let $p$ be prime and $m \in \mathbb{N}$. Then the $p^{m}$th cyclotomic polynomial $\Phi_{p^{m}}$ equals \[1+x^{p^{m-1}}+x^{2p^{m-1}}+\,\cdot \cdot \cdot\, +x^{(p-1)p^{m-1}}. 
\]\end{lemma}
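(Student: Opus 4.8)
The plan is to derive the formula from the fundamental product identity $\prod_{d \mid n} \Phi_d(x) = x^n - 1$, which holds in $\mathbb{Z}[x]$ and determines the cyclotomic polynomials recursively. The key observation is that the divisors of $p^m$ are exactly the powers $p^0, p^1, \dots, p^m$, so for $m \ge 1$ this identity specializes to
\[
x^{p^m} - 1 = \prod_{j=0}^{m} \Phi_{p^j}(x) \qquad \text{and} \qquad x^{p^{m-1}} - 1 = \prod_{j=0}^{m-1} \Phi_{p^j}(x).
\]
Every factor $\Phi_{p^j}(x)$ with $0 \le j \le m-1$ appears in both products, so forming the ratio of the two identities should cancel all of these common factors and isolate the single remaining factor $\Phi_{p^m}(x)$.

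First I would carry out this division in the fraction field $\mathbb{Q}(x)$, where each monic $\Phi_{p^j}(x)$ is a unit, to conclude
\[
\Phi_{p^m}(x) = \frac{x^{p^m} - 1}{x^{p^{m-1}} - 1}.
\]
Next I would evaluate the right-hand side explicitly by substituting $y = x^{p^{m-1}}$: the numerator becomes $x^{p^m} - 1 = (x^{p^{m-1}})^p - 1 = y^p - 1$ and the denominator is $y - 1$, so the elementary factorization $y^p - 1 = (y-1)(1 + y + \cdots + y^{p-1})$ gives
\[
\Phi_{p^m}(x) = 1 + y + y^2 + \cdots + y^{p-1} = 1 + x^{p^{m-1}} + x^{2p^{m-1}} + \cdots + x^{(p-1)p^{m-1}},
\]
which is precisely the claimed expression.

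I do not expect a serious obstacle, as this is an elementary computation; the only points requiring care are the justification of the division and the base case. The quotient above is a genuine polynomial because $x^{p^{m-1}} - 1$ divides $x^{p^m} - 1$, so the resulting identity in fact holds over $\mathbb{Z}[x]$ and therefore descends modulo any prime, as is needed for the finite-field applications in the preceding lemma. For the base case $m = 1$ one gets $\Phi_p(x) = (x^p - 1)/(x - 1) = 1 + x + \cdots + x^{p-1}$, matching the formula with $p^{m-1} = 1$. As a consistency check, the right-hand side has degree $(p-1)p^{m-1} = p^m - p^{m-1} = \varphi(p^m)$, which agrees with $\deg \Phi_{p^m}$; the same conclusion can alternatively be reached by verifying that the roots of $1 + x^{p^{m-1}} + \cdots + x^{(p-1)p^{m-1}}$ are exactly the $p^m$th roots of unity that are not $p^{m-1}$th roots of unity, i.e. the primitive $p^m$th roots of unity.
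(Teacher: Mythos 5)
Your proof is correct: the identity $\prod_{d \mid p^m} \Phi_{p^j}(x) = x^{p^m}-1$ over the divisors $p^0,\dots,p^m$, the cancellation giving $\Phi_{p^m}(x) = (x^{p^m}-1)/(x^{p^{m-1}}-1)$, and the substitution $y = x^{p^{m-1}}$ together yield exactly the stated formula, and your degree check against $\varphi(p^m)$ confirms it. The paper offers no proof of its own here --- it quotes the result directly from [12, Example 2.46] --- so there is nothing to compare against; your argument is the standard derivation and fills in the citation completely (the only implicit convention is that $m \geq 1$, since the displayed formula involves $p^{m-1}$ as an exponent).
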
\text{}
\begin{lemma}[2, Proposition 3.4.1] Let $p$ be a prime and $g$ a positive integer. Then the following three assertions are equivalent: \begin{enumerate} 
\item $g$ is a primitive root modulo $p$ and $g^{p-1} \neq 1 \text{ mod } p$;  
\item $g$ is a primitive root modulo $p^2$; 
\item For every $i \geq 2$, $g$ is a primitive root modulo $p^{i}$.\end{enumerate} \end{lemma}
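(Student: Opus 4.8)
The plan is to compare the multiplicative order of $g$ along the tower of reductions $(\mathbb{Z}/p^{i}\mathbb{Z})^{\times}$, using that for an odd prime $p$ each of these unit groups is cyclic of order $\varphi(p^{i})=(p-1)p^{\,i-1}$. (I take $p$ odd throughout: for $p=2$ the group $(\mathbb{Z}/2^{i}\mathbb{Z})^{\times}$ is not cyclic for $i\geq 3$, so no primitive root exists and (3) fails, consistent with this result being applied only when $p>2$.) Two of the implications are essentially formal. For (3)$\Rightarrow$(2), specialize $i=2$. For (2)$\Rightarrow$(1), note that the reduction homomorphism $(\mathbb{Z}/p^{2}\mathbb{Z})^{\times}\to(\mathbb{Z}/p\mathbb{Z})^{\times}$ is surjective, so it carries a generator $g$ to a generator, giving the "primitive root mod $p$" half of (1); moreover, if $g^{p-1}\equiv 1\pmod{p^{2}}$ then the order of $g$ modulo $p^{2}$ would divide $p-1<\varphi(p^{2})$, contradicting (2), which yields the non-vanishing condition $g^{p-1}\not\equiv 1\pmod{p^{2}}$.

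For (1)$\Rightarrow$(2) I would argue by order counting. Let $d$ be the order of $g$ modulo $p^{2}$. Reducing mod $p$ shows $p-1\mid d$ (since $g$ is a primitive root mod $p$), while $d\mid\varphi(p^{2})=p(p-1)$, so the only possibilities are $d=p-1$ and $d=p(p-1)$. The hypothesis $g^{p-1}\not\equiv 1\pmod{p^{2}}$ rules out $d=p-1$, forcing $d=\varphi(p^{2})$, i.e. $g$ is a primitive root modulo $p^{2}$.

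The real content is (2)$\Rightarrow$(3), which I would prove by induction on $i$ via a lifting-the-exponent estimate. Writing $g^{p-1}=1+bp$, condition (2) forces $p\nmid b$ by the argument above. The key calculation is that for odd $p$, if $a=1+cp^{j}$ with $p\nmid c$ and $j\geq 1$, then expanding $a^{p}=(1+cp^{j})^{p}=1+cp^{j+1}+\binom{p}{2}c^{2}p^{2j}+\cdots$ one checks that every term beyond the first two is divisible by $p^{j+2}$: the $\binom{p}{2}$-term equals $\tfrac{p-1}{2}c^{2}p^{2j+1}$ (an integer since $p$ is odd, with $2j+1\geq j+2$), and for $2\leq k\leq p-1$ the factor $p\mid\binom{p}{k}$ supplies the extra power. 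Hence $a^{p}=1+c'p^{j+1}$ with $c'\equiv c\pmod p$, so $p\nmid c'$. Iterating from $a=g^{p-1}=1+bp$ gives $g^{(p-1)p^{\,i-2}}\equiv 1+b_{i}p^{\,i-1}\pmod{p^{i}}$ with $p\nmid b_{i}$, so $g^{(p-1)p^{\,i-2}}\not\equiv 1\pmod{p^{i}}$. Therefore the order of $g$ modulo $p^{i}$ does not divide $(p-1)p^{\,i-2}$; combined with $p-1\mid\mathrm{ord}$ and $\mathrm{ord}\mid\varphi(p^{i})=(p-1)p^{\,i-1}$, the order must equal $(p-1)p^{\,i-1}=\varphi(p^{i})$, so $g$ is a primitive root modulo $p^{i}$.

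The main obstacle is precisely the odd-$p$ lifting lemma in the last paragraph: one must verify that no higher binomial term reintroduces a factor of $p$ that cancels the surviving $cp^{j+1}$ coefficient. This is exactly where $p\neq 2$ is essential, since for $p=2$ the $\binom{2}{2}$-term has the same $2$-adic valuation as the linear term and both the argument and the conclusion break down; this is the step I would write out in full detail.
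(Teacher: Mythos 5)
Your proof is correct and complete; note that the paper does not prove this lemma at all --- it is quoted (with a typo) from the cited reference [2, Proposition 3.4.1] --- so there is no in-paper argument to compare against, and what you have written is the standard proof of this classical fact. Two of your editorial decisions deserve to be made explicit rather than left implicit: first, condition (1) as printed reads ``$g^{p-1} \neq 1 \bmod p$,'' which is unsatisfiable by Fermat's little theorem and must be read as ``$g^{p-1} \not\equiv 1 \pmod{p^2}$,'' exactly as you do throughout; second, the equivalence genuinely fails for $p=2$ (for instance $g=3$ satisfies (1) and (2), yet $3^2 \equiv 1 \pmod 8$, so (3) fails), so your restriction to odd $p$ is a necessary hypothesis rather than a convenience, and it is harmless here since the paper only invokes the lemma for primes $p>3$. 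The only microscopic omission in your lifting computation is the top binomial term $k=p$, namely $c^p p^{pj}$, for which $pj \geq j+2$ follows from $p \geq 3$ and $j \geq 1$; with that one line added, the induction is airtight.
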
 \text\newline
\begin{proposition} Let $R=\mathbb{F}_{2}\llbracket x,y \rrbracket/(xy(x^{2^{n}p^{m}}+y^{2^{n}p^{m}}))$,
where $m,n \geq 0$ and $p>3$ is a prime such that $2$ is a primitive root modulo $p^2$. Then \newline
\[\gll(R)=\text{index}(R)+1=2^{n}p^{m}+3
\]\text{}
and \[\mathfrak{m}^{2^{n}p^{m}+3} \subseteq (x^2+xy+y^2).
\]\text{}\newline If $m=1$, then we need only assume that $2$ is a primitive root modulo $p$. \end{proposition}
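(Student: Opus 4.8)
The plan is to follow the recipe announced just before Proposition 3.5: exhibit a homogeneous nonzerodivisor of degree $2$ on $\gr(R)$ and then combine Theorem 2.3 with Lemma 3.4. Write $N=2^{n}p^{m}$, so that $f=xy(x^{N}+y^{N})$ is homogeneous of order $N+2$ and $\text{index}(R)=e(R)=N+2$. My candidate for the quadratic nonzerodivisor is $w=x^{2}+xy+y^{2}$, the homogenization of $\Phi_{3}=t^{2}+t+1$, which is the unique irreducible quadratic over $\mathbb{F}_{2}$. Since $f$ is homogeneous we have $\gr(R)\cong\mathbb{F}_{2}[x,y]/(f)$, so it will suffice to show that $w$ is coprime to $f$ in $\mathbb{F}_{2}[x,y]$; then $w^{*}=w$ is $\gr(R)$-regular and all the hypotheses of Theorem 2.3 are met with $t=2$.

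The heart of the argument, and the step where the cyclotomic lemmas are needed, is this coprimality. Because the characteristic is $2$, the Frobenius gives $x^{N}+y^{N}=(x^{p^{m}}+y^{p^{m}})^{2^{n}}$, so I would reduce to factoring $x^{p^{m}}+y^{p^{m}}$. Dehomogenizing and using $t^{p^{m}}-1=\prod_{i=0}^{m}\Phi_{p^{i}}(t)$ (valid over $\mathbb{F}_{2}$ since $p^{m}$ is odd), Lemma 3.10 identifies each cyclotomic factor, and Lemma 3.9 combined with the hypothesis that $2$ is a primitive root modulo $p^{2}$ — propagated by Lemma 3.11 to a primitive root modulo every $p^{i}$ — shows each $\Phi_{p^{i}}$ is irreducible over $\mathbb{F}_{2}$ of degree $\varphi(p^{i})=p^{i-1}(p-1)\ge p-1\ge 4$. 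Homogenizing gives $x^{p^{m}}+y^{p^{m}}=(x+y)\prod_{i=1}^{m}\widetilde{\Phi}_{p^{i}}$ with every irreducible factor of degree $1$ or at least $4$, so $f$ is, apart from the factors $x$ and $y$, a product of irreducibles of degree $1$ or $\ge 4$. As $w$ is irreducible of degree $2$ it equals none of these, hence is coprime to $f$. The essential underlying fact is that $3\nmid N$ because $p>3$, so a primitive cube root of unity is never an $N$-th root of unity and $w\nmid(x^{N}+y^{N})$.

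Granting regularity of $w$, the two bounds follow quickly. Since $R$ is a hypersurface, hence Gorenstein, Theorem 2.3 (equivalently Corollary 2.5) applies with the nonzerodivisor $w\in\mathfrak{m}^{2}\setminus\mathfrak{m}^{3}$, $t=2$, and $s=\text{index}(R)=N+2$, giving $\gll(R)\le\text{index}(R)+1=N+3$. Moreover $\mu_{R}(\mathfrak{m}^{N+3})=N+2\ge 2$, so $\mathfrak{m}^{N+3}$ is not principal and the final clause of Theorem 2.3 yields $\mathfrak{m}^{N+3}\subseteq(w)=(x^{2}+xy+y^{2})$. For the matching strict lower bound I would observe that over $\mathbb{F}_{2}$ the only nonzero linear forms are $x$, $y$, and $x+y$, and each divides $f$: indeed $x,y\mid f$ trivially, and $(x+y)\mid(x^{N}+y^{N})$ since in characteristic $2$ the polynomial $x^{N}+y^{N}=x^{N}-y^{N}$ vanishes at $x=y$. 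Thus $R$ has no nonzerodivisor of the form $\alpha x+\beta y$, so Lemma 3.4 gives $\gll(R)>e(R)=N+2$. Combining, $\gll(R)=N+3=\text{index}(R)+1$.

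I expect the main obstacle to be exactly the coprimality of $w$ with $f$, i.e. certifying that $f$ carries no irreducible quadratic factor; the cyclotomic factorization resolves this by pinning down every irreducible factor of $x^{N}+y^{N}$ as having degree $1$ or $\ge 4$. This is also what accounts for the refinement when $m=1$: there only $x^{p}+y^{p}=(x+y)\widetilde{\Phi}_{p}$ appears, so one needs merely that $\widetilde{\Phi}_{p}$ is irreducible, which holds as soon as $2$ is a primitive root modulo $p$; for $m\ge 2$ the higher factors $\widetilde{\Phi}_{p^{i}}$ also occur, and Lemma 3.11 is precisely what upgrades the single hypothesis that $2$ is a primitive root modulo $p^{2}$ to the irreducibility of all of them.
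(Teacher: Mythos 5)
Your proposal is correct and follows essentially the same route as the paper: the same quadratic nonzerodivisor $x^{2}+xy+y^{2}$, the same cyclotomic factorization of $x^{p^{m}}+y^{p^{m}}$ via Lemmas 3.9--3.11 to show every irreducible factor of the defining polynomial has degree $1$ or at least $p-1\ge 4$, and the same combination of Theorem 2.3 with Lemma 3.4 for the two bounds. Your explicit check that $\mathfrak{m}^{2^{n}p^{m}+3}$ is not principal (needed for the containment in $(x^{2}+xy+y^{2})$) is a small point the paper leaves implicit, but it is not a different argument.
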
\begin{proof} First assume that $m>0$. We show that $x^2+xy+y^2$ is $\gr(R)$-regular. Let $S=\mathbb{F}_{2}\llbracket x,y \rrbracket$ and suppose $f,g \in S$ such that \newline \begin{equation} (x^2+xy+y^2)f=g(xy(x^{2^{n}p^{m}}+y^{2^{n}p^{m}}))=g(xy(x^{p^{m}}+y^{p^{m}})^{2^{n}}). \end{equation}\text{}\newline By Lemmas 3.9 through 3.11, $\Phi_{p^{i}}(x)$ is an irreducible polynomial over $\mathbb{F}_{2}$ of degree $p^{i}-p^{i-1}$ for $1 \leq i \leq m$. We obtain the following factorization of $x^{p^{m}}+1$ into irreducible polynomials over $\mathbb{F}_{2}$. 
\[x^{p^{m}}+1=(x+1)\prod\limits_{i=1}^{m}\Phi_{p^{i}}(x).
\] Let $h_{i}(x,y):=y^{p^{i}-p^{i-1}}\Phi_{p^{i}}(x/y)$ for $i=1,...,m$. Then $h_{i}(x,y)$ is a homogeneous polynomial of degree $p^{i}-p^{i-1}$, and \begin{equation} x^{p^{m}}+y^{p^{m}}=(x+y)\prod\limits_{i=1}^{m}h_{i}(x,y).\end{equation}\text{} We claim that each $h_{i}(x,y)$ is irreducible over $\mathbb{F}_{2}$. Suppose $p,q \in \mathbb{F}_{2}[x,y]$ such that 
\[
h_{i}(x,y)=p(x,y)q(x,y). 
\] 
Since $h_{i}$ is homogeneous, $p$ and $q$ are homogeneous. Let $y=1$ in the above equation. Then  
\[
\Phi_{p^{i}}(x)=h_{i}(x,1)=p(x,1)q(x,1). 
\]
Since $\Phi_{p^{i}}$ is irreducible over $\mathbb{F}_{2}$, $p(x,1)=\Phi_{p^{i}}(x)$ or $q(x,1)=\Phi_{p^{i}}(x)$. Assume $p(x,1)=\Phi_{p^{i}}(x)$. Then $p(x,y)=h_{i}(x,y)$, so $h_{i}(x,y)$ is irreducible. By equations (3.1) and (3.2), $h_{i} \mid (x^2+xy+y^2)$ or $h_{i} \mid f$. Since the degree of $h_{i}$ is 
\[
p^{i}-p^{i-1}=p^{i-1}(p-1) \geq p^{i-1}3, 
\]
it follows that $h_{i} \mid f$. It is clear that $x$, $y$, and $x+y$ divide $f$ as well, so $f \in (xy(x^{p^{m}}+y^{p^{m}})^{2^{n}})$, and $x^2+xy+y^2$ is a nonzerodivisor on $\gr(R)$. By Theorem 2.3 and Lemma 3.4, $\gll(R) = \text{index}(R)+1$ and $\mathfrak{m}^{2^{n}p^{m}+3} \subseteq (x^2+xy+y^2)$. If $m=0$, \,(3.1) becomes 
\[(x^2+xy+y^2)f=g(xy(x^{2^{n}}+y^{2^{n}}))=g(xy(x+y)^{2^{n}}).
\]\text{}It follows that $f \in xy(x^{2^{n}}+y^{2^{n}})$, so $x^2+xy+y^2$ is a nonzerodivisor on $\gr(R)$. Therefore, $\mathfrak{m}^{2^{n}+3} \subseteq (x^2+xy+y^2)$ and $\gll(R) = \text{index}(R)+1$.  \end{proof}\text{}
\begin{remark} Whether there are infinitely many primes $p$ such that $2$ is a primitive root modulo $p$ is an open question. This is a special case of Artin's conjecture on primitive roots [2, p.66]. A list of the first primes $p$ for which $2$ is a primitive root modulo $p$ is sequence A001122 in the OEIS.
\end{remark}\text{} 
\section{\large{Generalized Loewy Length of Graded Algebras}}  
We now consider positively-graded Noetherian $k$-algebras and a graded analogue of the generalized Loewy length of a local ring. Throughout this section, $k$ is an arbitrary field. \newline
\begin{definition}
Let $R=\bigoplus\limits_{i \geq 0}R_{i}$ be a positively-graded Noetherian $k$-algebra, where $R_{0}=k$ and $\mathfrak{m}=\bigoplus\limits_{i \geq 1}R_{i}$ is the irrelevant ideal. For $n \geq 0$, let $\mathfrak{m}_{n}:=\bigoplus\limits_{i \geq n}R_{i}$. The {\it{generalized graded length}} of $R$, denoted $\ggl(R)$, is the smallest positive integer $n$ for which $\mathfrak{m}_{n}$ is contained in the ideal generated by a homogeneous system of parameters.\end{definition}\text{}\newline
In this context, the generalized Loewy length, $\gll(R)$, is the smallest positive integer $n$ for which $\mathfrak{m}^{n}$ is contained in the ideal generated by a homogeneous system of parameters.\newline\newline
With Herzog, we note that all of the above definitions can be transferred accordingly to standard graded Gorenstein $k$-algebras [9, p.98]. For $R=k[x,y]/(f)$, one can prove that $\text{index}(R)=\text{deg}(f)$ by using Ding's arguments in [5] to prove the standard graded version of [5, Theorem 3.3]. By the standard graded version of Proposition 3.3, the generalized Loewy length of $k[x,y]/(f)$ is one less than the sum of the degree of $f$ and the degree of a witness to $\text{g}\ell\ell(R)$.\newline 
\begin{proposition}
Let $R=k[x,y]/(f)$ be standard graded, where $f\in k[x,y]$ is a form of degree $e$. Let $z \in (x,y)R$ be a witness to $\gll(R)$. Then $\gll(R)=\text{deg}_{R}(z)+e-1$.\end{proposition}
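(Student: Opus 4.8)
The plan is to bypass the local machinery of Theorem~2.3 and instead exploit that for a standard graded ring the associated graded ring $\gr(R)$ is $R$ itself, so the whole statement can be read off a single Hilbert series. First I would note that, since the graded generalized Loewy length is defined through \emph{homogeneous} systems of parameters, a witness $z$ is a homogeneous element, and since $R=k[x,y]/(f)$ is a one-dimensional Cohen--Macaulay (indeed Gorenstein) ring, the parameter $z$ is a homogeneous nonzerodivisor. Because $\gr(R)=R$ here, the initial form $z^{*}$ equals $z$ and is automatically $\gr(R)$-regular; this is why, unlike in Proposition~3.3, no separate hypothesis on the initial form is required. Set $d=\deg_R(z)\ge 1$.

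Next I would compute Hilbert series. Because $k[x,y]$ is a domain and $f\neq 0$ is a form of degree $e$, the form $f$ is a nonzerodivisor, so
\[
H_R(T)=\frac{1-T^{e}}{(1-T)^{2}}=\frac{1+T+\cdots+T^{e-1}}{1-T}.
\]
Since $z$ is a homogeneous nonzerodivisor of degree $d$, multiplication by $z$ gives a graded injection $R(-d)\hookrightarrow R$ with cokernel $R/(z)$, whence
\[
H_{R/(z)}(T)=(1-T^{d})H_R(T)=(1+T+\cdots+T^{d-1})(1+T+\cdots+T^{e-1}).
\]
This is a polynomial of degree $d+e-2$ with leading coefficient $1$, so the Artinian ring $R/(z)$ has its top nonzero graded component in degree $d+e-2$ and vanishes in all higher degrees.

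Then I would translate this into the statement about $\gll(R)$. As $z$ is homogeneous, $(z)$ is a graded ideal, and in the standard graded ring $R$ one has $\mathfrak{m}^{n}=\bigoplus_{i\ge n}R_i$; hence $\mathfrak{m}^{n}\subseteq(z)$ holds exactly when $(R/(z))_i=0$ for all $i\ge n$, i.e.\ exactly when $n\ge d+e-1$. Let $n_{0}$ be the least integer with $\mathfrak{m}^{n_{0}}\subseteq(z)$, so $n_{0}=d+e-1$. Since $z$ is a witness to $g:=\gll(R)$ we have $\mathfrak{m}^{g}\subseteq(z)$, giving $n_{0}\le g$; and since $\gll(R)$ is the minimum, over all homogeneous systems of parameters, of the least exponent they realize, and $z$ realizes $n_{0}$, we get $g\le n_{0}$. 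Therefore $g=n_{0}=\deg_R(z)+e-1$, as claimed.

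The only delicate point is this last bookkeeping: both directions of the witness definition are needed, so that the least exponent for this particular $z$ is forced to coincide with the global minimum $\gll(R)$ rather than merely to bound it; the Hilbert-series computation then identifies that common value as $d+e-1$. The auxiliary facts that make the Hilbert series multiply — that $f$ and $z$ are nonzerodivisors — are immediate, respectively from $k[x,y]$ being a domain and from $R$ being one-dimensional Cohen--Macaulay. Alternatively, one could mirror the proof of Proposition~3.3 verbatim, using the standard graded analogues of Lemma~3.1 and Theorem~2.3 together with $\text{index}(R)=e$; the Hilbert-series argument above is simply a more self-contained route in the graded category.
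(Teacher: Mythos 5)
Your proof is correct, but it takes a genuinely different route from the paper's. The paper obtains this proposition as the standard graded version of Proposition 3.3: one first shows $\text{index}(R)=e$ (the graded analogue of [5, Theorem 3.3]), then the graded analogue of Lemma 3.1 --- a dimension count on the leading forms of degree $g$ in $(f,\zeta)$ --- gives $\deg_R(z)\le g-e+1$, while the graded analogue of Theorem 2.3 (via the splitting of Lemma 2.1 and the $\delta$-invariant) gives $g\le e+\deg_R(z)-1$; combining the two yields equality. Your Hilbert-series computation replaces all of this with the single identity $H_{R/(z)}(T)=(1+T+\cdots+T^{d-1})(1+T+\cdots+T^{e-1})$, which pins down the top nonvanishing degree of $R/(z)$ as exactly $d+e-2$, hence the least $n$ with $\mathfrak{m}^{n}\subseteq(z)$ as exactly $d+e-1$; the two directions of the witness definition then force this value to equal $\gll(R)$. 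What your approach buys: it is self-contained in the graded category, avoids the index and the $\delta$-invariant entirely, produces both inequalities simultaneously, and makes transparent why the regularity hypothesis on $z^{*}$ appearing in Proposition 3.3 can be dropped here (for standard graded $R$ one has $\gr(R)\cong R$, and a homogeneous parameter of the one-dimensional Cohen--Macaulay ring $R$ is automatically a nonzerodivisor). What it gives up is the link to $\text{index}(R)$ and to the local case, which is the thread the paper is developing; note also that your argument leans essentially on $\mathfrak{m}^{n}=\bigoplus_{i\ge n}R_{i}$, so it does not extend beyond the standard graded setting, whereas the paper's machinery is designed to.
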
\text{}
\newline 
Let $(R,\mathfrak{m})$ be a positively-graded Noetherian $k$-algebra. It is clear that for each $n \geq 1$, we have \,$\mathfrak{m}^{n} \subseteq \mathfrak{m}_{n}$,\, so\, $\gll(R) \leq \ggl(R)$. We now determine upper and lower bounds for $\ggl(R)$ in terms of $\gll(R)$ and the minimum and maximum degrees of generators of $R$.\newline
\begin{proposition} Let $(R,\mathfrak{m})$ be a positively-graded Noetherian $k$-algebra, where $R_{0}=k$ and $\mathfrak{m}$ is the irrelevant ideal. Suppose $x_{1},...,x_{n} \in \mathfrak{m}$ are homogeneous elements such that $R=k[x_{1},...,x_{n}]$. Let 
\[
\text{min}\{\text{deg}(x_{i})\}_{i=1}^{n}=a \leq b = \text{max}\{\text{deg}(x_{i})\}_{i=1}^{n}.
\]
Then 
\[
a(\gll(R))-(a-1)^2 \leq  \ggl(R) \leq b(\gll(R))-b+1.
\]
\text{}\newline  
If $a=b=1$, then $\ggl(R)=\gll(R)$.
\end{proposition}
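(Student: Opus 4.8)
The plan is to reduce everything to two elementary containments between the ordinary powers $\mathfrak{m}^n$ and the truncations $\mathfrak{m}_m$, and then convert these into the two inequalities by passing through a single homogeneous system of parameters. Since $R=k[x_1,\dots,x_n]$ with each $x_i$ homogeneous of degree between $a$ and $b$, the graded piece $R_d$ is spanned over $k$ by the monomials $x_1^{e_1}\cdots x_n^{e_n}$ with $\sum_i e_i\deg(x_i)=d$. Comparing the number of factors $\sum_i e_i$ to the degree $d$ gives exactly the two bounds I need. Because each factor has degree at least $a$, any product of $n$ generators has degree at least $na$, so
\[
\mathfrak{m}^n\subseteq \mathfrak{m}_{na}\qquad\text{for all }n\ge 1;
\]
and because each factor has degree at most $b$, a monomial of degree $d$ uses at least $d/b$ factors, so once $d\ge b(n-1)+1$ it is a product of at least $n$ generators, giving
\[
\mathfrak{m}_{b(n-1)+1}\subseteq \mathfrak{m}^n\qquad\text{for all }n\ge 1.
\]

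For the upper bound I would set $g=\gll(R)$ and fix a homogeneous system of parameters $\underline{\mathbf{x}}$ with $\mathfrak{m}^{g}\subseteq(\underline{\mathbf{x}})$. Applying the second containment with $n=g$ yields $\mathfrak{m}_{bg-b+1}\subseteq\mathfrak{m}^{g}\subseteq(\underline{\mathbf{x}})$, and since $(\underline{\mathbf{x}})$ is a homogeneous system of parameters, this exhibits $\ggl(R)\le b\,\gll(R)-b+1$ straight from the definition.

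For the lower bound I would run the same idea in reverse. Let $G=\ggl(R)$, witnessed by a homogeneous system of parameters $\underline{\mathbf{y}}$ with $\mathfrak{m}_G\subseteq(\underline{\mathbf{y}})$, and put $n=\lceil G/a\rceil$, so that $na\ge G$. The first containment gives $\mathfrak{m}^{n}\subseteq\mathfrak{m}_{na}\subseteq\mathfrak{m}_G\subseteq(\underline{\mathbf{y}})$, whence $\gll(R)\le\lceil G/a\rceil\le (G+a-1)/a$. Rearranging produces $G\ge a\,\gll(R)-a+1$, and since $(a-1)^2\ge a-1$ for every integer $a\ge 1$, this implies the stated inequality $a\,\gll(R)-(a-1)^2\le\ggl(R)$. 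Finally, when $a=b=1$ both the upper and lower bounds collapse to $\gll(R)$, forcing $\ggl(R)=\gll(R)$.

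The argument is essentially bookkeeping, so I do not anticipate a serious obstacle; the point requiring the most care is the monomial count behind $\mathfrak{m}_{b(n-1)+1}\subseteq\mathfrak{m}^n$, where one must check that a homogeneous element of degree at least $b(n-1)+1$ really is a combination of monomials each having at least $n$ generator-factors (this is precisely where the $-b+1$ in the upper bound originates). I would also record that this method in fact yields the sharper estimate $a\,\gll(R)-a+1\le\ggl(R)$, of which the inequality stated in the proposition is the weaker consequence.
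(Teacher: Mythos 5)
Your argument is correct and rests on the same two containments the paper uses, namely $\mathfrak{m}^{n}\subseteq\mathfrak{m}_{na}$ and $\mathfrak{m}_{b(n-1)+1}\subseteq\mathfrak{m}^{n}$ (the paper proves the latter by induction on $n$, writing a homogeneous element as $\sum s_i x_i$ and bounding $\deg(s_i)$; your monomial count is an equivalent justification). The upper bound is handled identically. The one genuine difference is in the lower bound: the paper writes $\ggl(R)=ac+l$ with $0\le l<a$, deduces $\gll(R)\le c+l$ from $\mathfrak{m}^{c+l}\subseteq\mathfrak{m}_{ac+l}$, and then loses a factor by estimating $l\le a-1$, ending with $\ggl(R)\ge a\,\gll(R)-(a-1)^2$. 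By instead taking $n=\lceil \ggl(R)/a\rceil$ you get $\gll(R)\le\lceil \ggl(R)/a\rceil$, hence the strictly sharper $\ggl(R)\ge a\,\gll(R)-a+1$, of which the stated inequality is a consequence via $(a-1)^2\ge a-1$. So your route is not just valid but improves the constant in the proposition for $a\ge 3$; the paper's version buys nothing extra here, and your observation that the sharper estimate holds is worth recording.
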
 
\begin{proof} We claim that for $n \geq 0$, 
$\mathfrak{m}_{nb+1} \subseteq \mathfrak{m}^{n+1}$. 
This is trivial when $n=0$. Suppose the inclusion holds for some $n \geq 0$. Let $x \in \mathfrak{m}_{(n+1)b+1}$ be homogeneous, and suppose $x=\sum\limits_{i=1}^{n}s_{i}x_{i}$, where each $s_{i} \in R$ is homogeneous. Then $\text{deg}(s_{i}) \geq (n+1)b+1-\text{deg}(x_{i}) \geq nb+1.$ Therefore, $s_{i} \in \mathfrak{m}_{nb+1} \subseteq \mathfrak{m}^{n+1}$, and $x \in \mathfrak{m}^{n+2}$. This proves the claim. Let $n=\gll(R)-1$. Then by the above inclusion, $\ggl(R) \leq b(\gll(R)-1)+1$. \newline\newline 
Let $m=\ggl(R)$. There exists an integer $c\geq 0$ and an integer $0 \leq l < a$ such that $m=ac+l$. It is clear that $\mathfrak{m}^{i} \subseteq \mathfrak{m}_{ia}$ for $i \geq 0$. We claim that $\mathfrak{m}^{i+j} \subseteq \mathfrak{m}_{ia+j}$ for $i,j \geq 0$. Fix $i$. If the inclusion holds for some $j \geq 0$, then 
\[
\mathfrak{m}^{i+j+1}=\mathfrak{m} \cdot \mathfrak{m}^{i+j} \subseteq \mathfrak{m} \cdot \mathfrak{m}_{ia+j} \subseteq \mathfrak{m}_{ia+j+1}.
\] 
It follows that $\mathfrak{m}^{c+l} \subseteq \mathfrak{m}_{m}$ and $c+l \geq n=\gll(R)$. Since $ac+al \geq an$, we have 
\[
\ggl(R) \geq a(\gll(R))-(a-1)l
\]
and
\[
\ggl(R) \geq a(\gll(R))-(a-1)^2.
\]\end{proof}\text{}\newline
Let $H=\langle a_1,...,a_n \rangle$ be the numerical semigroup with unique minimal generating set $0<a_1<a_2<\cdot\cdot\cdot <a_n$, where $\text{gcd}(a_1,...,a_n)=1$. Let $C$ denote the {\it{conductor}} of $H$, the smallest integer $n \in H$ for which every integer larger than $n$ is also in $H$. Define $k[H]:=k[t^{a_1},...,t^{a_n}] \subseteq k[t]$, where $k[H]$ is positively-graded via $|t^a|=a$.
\newline
\begin{proposition} Let $R=k[H]$, where $H=\langle a_1,...,a_n \rangle$. Then \,$\ggl(R) = C+a_{1}$.\text{} \end{proposition}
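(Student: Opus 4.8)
The plan is to exploit the especially rigid graded structure of $R=k[H]$. Because $R$ is spanned as a $k$-vector space by the monomials $\{t^h : h\in H\}$, its graded piece $R_d$ is one-dimensional (equal to $k\,t^d$) when $d\in H$ and zero otherwise. Hence every nonzero homogeneous element of $R$ is a scalar multiple of $t^a$ for some $a\in H$ with $a>0$. Since $R$ is a one-dimensional domain, a homogeneous system of parameters is a single homogeneous nonzerodivisor, i.e.\ a single $t^a$. So the computation of $\ggl(R)$ reduces to minimizing, over all $a\in H$ with $a>0$, the least $n$ for which $\mathfrak{m}_n\subseteq (t^a)$. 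This reduction to monomial parameters is the conceptual step that makes everything elementary.

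First I would translate the containment into a numerical condition on $H$. The ideal $(t^a)$ has $k$-basis $\{t^{a+h}:h\in H\}$, so the set of degrees occurring in $(t^a)$ is exactly $a+H$. Therefore $\mathfrak{m}_n\subseteq (t^a)$ holds if and only if every $i\in H$ with $i\ge n$ satisfies $i-a\in H$. Writing $N(a)$ for the least such $n$, I would introduce $L(a)=\max\{\,i\in H : i-a\notin H\,\}$, the largest \emph{bad} degree, and record the obvious equivalence $N(a)=L(a)+1$.

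The heart of the argument is the computation $L(a)=a+C-1$, which I expect to be the one step that needs a little care. Let $g=C-1$ be the Frobenius number, the largest integer not in $H$. On one hand, $a+g\in H$ since $a+g\ge C$, while $(a+g)-a=g\notin H$, so $a+g$ is a bad degree. On the other hand, if $i\in H$ and $i>a+g$, then $i-a>g$, so $i-a\ge C$ and thus $i-a\in H$, meaning $i$ is not bad. Hence $L(a)=a+g$ and $N(a)=a+C$. The only subtlety here is the clean identification of the extremal bad degree via the Frobenius number; once this is isolated, the inequalities are immediate.

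Finally, minimizing $N(a)=a+C$ over all $a\in H$ with $a>0$ attains its minimum at the least positive element $a=a_1$, giving $\ggl(R)=a_1+C=C+a_1$. I do not anticipate any serious obstacle: the argument is a direct manipulation of the numerical semigroup once the parameters are seen to be monomials and the bad degrees are controlled by $C-1$.
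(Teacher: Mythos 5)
Your proposal is correct and follows essentially the same route as the paper: both reduce to the monomial parameter ideals $(t^a)$ with $a\in H\setminus\{0\}$, translate the containment $\mathfrak{m}_n\subseteq(t^a)$ into the condition that $i-a\in H$ for all $i\in H$ with $i\ge n$, and locate the threshold at $a+C$ via the Frobenius number $C-1$. Your write-up is somewhat more explicit about why every homogeneous parameter ideal is of the form $(t^a)$ and about exhibiting the extremal bad degree, but the argument is the same.
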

\begin{proof} 
Let $\mathfrak{m}=(t^{a_{1}},...,t^{a_{n}})$. It is clear that $\mathfrak{m}_{C+a_{1}} \subseteq (t^{a_{1}})$, so \,$\ggl(R) \leq C+a_{1}$. Let $n,d \geq 0$ and suppose $\mathfrak{m}_{C+n} \subseteq (t^d)$. This inclusion holds if and only if $t^{C+n+i} \in (t^d)$ for all $i \geq 0$, which is true if and only if\,
$C+n+i-d \in H$ for all $i \geq 0$. This is equivalent to the inequality
$C+n-d \geq C$, or
$n \geq d$.
Therefore, $\mathfrak{m}_{C+a_{1}-1} \not\subseteq (t^d)$ for all $d \in H \setminus \{0\}$. It follows that $\ggl(R)=C+a_{1}$.
\end{proof}\text{}
\begin{corollary}
Let $R=k[t^a,t^b]$, where $a<b$. Then \,$\ggl(R)=ba-b+1$.
\end{corollary}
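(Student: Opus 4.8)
The plan is to obtain this as an immediate consequence of Proposition 4.5. Set $H = \langle a, b \rangle$. The standing hypotheses of Proposition 4.5 require $\gcd(a,b)=1$, and under this assumption the unique minimal generating set of $H$ is $0 < a < b$, so in the notation of that proposition $a_1 = a$. Proposition 4.5 then yields
\[
\ggl(R) = C + a,
\]
where $C$ is the conductor of $H$. Thus the entire problem reduces to computing the conductor of the two-generator numerical semigroup $\langle a, b \rangle$.

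For this I would invoke the classical Sylvester–Frobenius formula: when $\gcd(a,b)=1$, the largest integer not lying in $\langle a, b \rangle$ (the Frobenius number) equals $ab - a - b$. Since the conductor is by definition one more than the Frobenius number, this gives $C = ab - a - b + 1$. If a self-contained argument is preferred over a citation, the standard proof runs through the complete residue system $0, a, 2a, \dots, (b-1)a$ modulo $b$: an integer $m$ lies in $\langle a, b\rangle$ precisely when the representative $ja$ in its residue class satisfies $ja \le m$, and a short check shows every $m > ab - a - b$ has this property while $ab - a - b$ does not.

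Substituting the value of $C$ into the expression from Proposition 4.5, I would conclude
\[
\ggl(R) = C + a = (ab - a - b + 1) + a = ab - b + 1 = ba - b + 1,
\]
which is the desired formula. Beyond recalling the conductor, there is no essential difficulty here; the only mathematical content is the Sylvester–Frobenius computation of $C$ for two coprime generators, which is entirely classical, so the main (and only mildly technical) step is establishing $C = ab - a - b + 1$ should one choose to prove it rather than cite it.
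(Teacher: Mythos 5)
Your proof is correct and follows the same route as the paper: both apply Proposition 4.5 with $a_1=a$ and substitute the classical value $C=ab-a-b+1$ of the conductor of $\langle a,b\rangle$ (the paper simply cites this formula, while you also sketch its standard proof). No further comment is needed.
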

\begin{proof}
The conductor of \,$\langle a,b \rangle$\, is \,$ba-a-b+1$ \,[14, p.201].
\end{proof}\text{}\newline 
Veliche notes that for $R=k\llbracket t^a,t^b \rrbracket$, where $a<b$\ and $k$ is infinite, we have $\gll(R)=\text{index}(R)=a$\, [16, p.3]. She then determines formulas for the generalized Loewy lengths of Gorenstein local numerical semigroup rings of embedding dimension at least three over infinite fields. [16, Corollary 2.4, Corollary 3.3, Proposition 3.9]. If we know the conductor of the semigroup that determines one of these rings, then the generalized graded length of the corresponding graded ring is easier to compute than the generalized Loewy length of this local ring.
\newline
\begin{proposition} Let $R=k[H]$, where $H=\langle a,b \rangle$ and $a<b.$ Suppose $z$ is a witness to $\ggl(R)$. Then $(z)=(t^{ia})$ for some $1 \leq i \leq 1+b-a$.
\end{proposition}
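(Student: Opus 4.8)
The plan is to reduce the problem to a purely numerical statement about exponents. Since $R=k[H]$ is a one-dimensional $\mathbb{N}$-graded domain and each graded piece is $R_s=k\,t^s$ for $s\in H$ and $R_s=0$ otherwise, every nonzero homogeneous element of the maximal ideal $\mathfrak{m}$ is a scalar multiple of a monomial $t^d$ with $d\in H$ and $d>0$. A homogeneous system of parameters of the one-dimensional ring $R$ consists of a single homogeneous nonzerodivisor, so any witness $z$ satisfies $(z)=(t^d)$ for some $d\in H\setminus\{0\}$. The proposition thus amounts to determining which exponents $d$ can occur.

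Next I would combine the defining property of a witness with the known value of $\ggl(R)$. By definition $\mathfrak{m}_{\ggl(R)}\subseteq (z)=(t^d)$, and by Corollary 4.6 we have $\ggl(R)=ba-b+1=C+a$, where $C=ba-a-b+1$ is the conductor of $H$. The essential input is the containment criterion isolated in the proof of Proposition 4.5: for integers $n,d\ge 0$, one has $\mathfrak{m}_{C+n}\subseteq (t^d)$ if and only if $n\ge d$. Writing $\ggl(R)=C+a$ and applying this criterion with $n=a$ gives $a\ge d$, that is, $d\le a$.

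Finally, because $a<b$ and $\gcd(a,b)=1$, the least positive element of $H=\langle a,b\rangle$ is $a$. Hence $d\in H$ with $0<d\le a$ forces $d=a$, so $(z)=(t^a)=(t^{1\cdot a})$. This is $(z)=(t^{ia})$ with $i=1$, and since $b>a$ we have $1+b-a\ge 2$, so indeed $1\le i\le 1+b-a$, as claimed.

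I do not anticipate a serious obstacle: once the witness is seen to generate a monomial ideal, the conclusion is a direct consequence of Corollary 4.6 together with the Proposition 4.5 criterion. The one step demanding care is the passage from the abstract notion of a witness to the numerical inclusion $\mathfrak{m}_{C+a}\subseteq (t^d)$, namely fixing that the relevant truncation degree is $\ggl(R)=C+a$ rather than a power of $\mathfrak{m}$. I note in passing that this argument pins the exponent down exactly as $d=a$, so among the permitted values $1\le i\le 1+b-a$ only $i=1$ is actually realized; the asserted interval functions as a bounding range.
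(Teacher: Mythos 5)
Your proof is correct, and it takes a genuinely different route from the paper's --- one that is both shorter and stronger. The paper works in the presentation $R\cong k[x,y]/(x^b-y^a)$: it first rules out $z\in(\overline{y})$ by an explicit divisibility computation, concludes that $z=\overline{x}^{\,i}$ for some $1\le i<b$, and then bounds $i$ by comparing the number of degree-$b$ forms lying in $(x^i,y^a)$ against the $b+1$ monomials of degree $b$; that count only yields $i\le 1+b-a$. You instead reduce everything to the semigroup: a homogeneous parameter must be a scalar multiple of $t^d$ with $d\in H\setminus\{0\}$ because each graded piece of $k[H]$ is one-dimensional, and the criterion $\mathfrak{m}_{C+n}\subseteq(t^d)\Leftrightarrow n\ge d$ --- already isolated in the paper's computation of $\ggl(k[H])=C+a_1$ (Proposition 4.4 and Corollary 4.5 in the paper's numbering; your references are off by one) --- applied with $n=a$ gives $d\le a$, while $d\ge a$ since $a$ is the least positive element of $H$. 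So you prove the sharper fact that $(z)=(t^{a})$, i.e.\ only $i=1$ occurs; the stated range $1\le i\le 1+b-a$ is merely what the paper's dimension count can detect, and your argument shows it is not attained beyond $i=1$. As a bonus, your argument makes no use of the two-generator hypothesis and applies verbatim to any numerical semigroup ring: every witness to $\ggl(k[H])$ generates $(t^{a_1})$.
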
 
\begin{proof} We have $R \cong k[x,y]/(x^b-y^a)=k[\overline{x},\overline{y}]$, where $\text{deg}(\overline{x})=a$, $\text{deg}(\overline{y})=b$, and $\mathfrak{m}=(\overline{x},\overline{y})$. Let $z \in \mathfrak{m}$ be a witness to $\ggl(R)$. Suppose $z \in (\overline{y})$. By Proposition 4.4, $\overline{x}$ is also a witness to $\ggl(R)$, so \,$\mathfrak{m}_{ab-(b-1)} \subseteq (\overline{x}) \cap (\overline{y})$. Since $a<b$ are coprime, we have $b=as+r$ for some $s>0$ and $0<r<b$, so \newline
\[
ab-(b-1)=a(as+r)-(as+r-1)=a((a-1)s+r)-(r-1).
\]\text{}\newline It follows that $\overline{x}^{(a-1)s+r} \in (\overline{x})\, \cap\, (\overline{y})=(\overline{x}\overline{y}, \overline{x}^{b})$. Since $(a-1)s+r<b$, we have $\overline{x}^{(a-1)s+r}=\overline{f}\overline{x}\overline{y}$ for some $f \in k[x,y]$ and $x^{(a-1)s+r}-fxy=gx^{b}-gy^{a}$ for some $g \in k[x,y]$. It follows that 
$fxy-gy^a=x^{(a-1)s+r}-gx^b$. 
Therefore, $y\,|\,(x^{(a-1)s+r}-gx^{b})$. Since $(a-1)s+r<b$, this is false. We conclude that $z \not\in (\overline{y})$. \newline\newline Therefore, $z=\overline{x}^{i}$ for some $1 \leq i<b$. For each $n \geq 1$, we have $\mathfrak{m}^{n} \subseteq \mathfrak{m}_{na}$, so  
\[
\mathfrak{m}^{b} \subseteq \mathfrak{m}_{ab} \subseteq \mathfrak{m}_{ab-(b-1)}.
\]
It follows that $\mathfrak{n}^{b} \subseteq (x^i,x^b-y^a)S=(x^i,y^a)S$, where $\mathfrak{n}=(x,y)S$, and $S$ denotes $k[x,y]$ with the standard grading. Let $M$ denote the $k$-vector space generated by monomials of degree $b$ in $(x^i, y^a)S$. Then 
\[\text{dim}_{k}M \leq \binom{2+(b-i)-1}{b-i}+\binom{2+(b-a)-1}{b-a}=2+2b-(a+i).
\] On the other hand, the $k$-vector space generated by monomials of degree $b$ in $\mathfrak{n}^{b}$ has dimension $b+1$. It follows that $b+1 \leq 2+2b-(a+i)$ and $i \leq 1+b-a$.
\end{proof}\text{} 
\begin{definition}
Let $(R,\mathfrak{m})$ be a positively-graded Noetherian $k$-algebra, where $R_{0}=k$ and $\mathfrak{m}$ is the irrelevant ideal. Let $I \subseteq R$ be a graded ideal. We say that $I$ is a {\it{graded reduction}} of $\mathfrak{m}$ of degree $d$ if there is a positive integer $i$ such that $I\mathfrak{m}_{i}=\mathfrak{m}_{i+d}$. 
\end{definition}\text{}\newline 
It is clear that for a numerical semigroup ring $k[t^{a_{1}},...,t^{a_{n}}]$, the ideal $(t^{a_{1}})$ is a graded reduction of $\mathfrak{m}$ of degree $a_{1}$. We therefore ask the following questions, which parallel a question asked by De Stefani [3, Questions 4.5 (ii)]. \newline
\begin{questions}
Suppose $R$ is a positively-graded Noetherian $k$-algebra. Is there a witness to $\ggl(R)$ that generates a graded reduction of $\mathfrak{m}$? What can be said about the degree of such a graded reduction?  
\end{questions}

\section*{Acknowledgements}
I would like to thank my thesis advisor Graham Leuschke for his support and insight into this topic, and for many helpful conversations. I would like to thank Elo\'isa Grifo for her helpful suggestions for the abstract of this paper. Finally, I would like to thank the referee for providing helpful comments and suggestions for this paper. 
\begin{bibdiv}
    \begin{biblist}

\bib{}{book}{
	Address = {Cambridge},
	Author = {Bruns, Winfred},
        Author= { Herzog, J\"{u}rgen}
	Date-Added = {2012-08-10 00:17:34 +0000},
	Date-Modified = {2012-08-10 00:17:35 +0000},
	Isbn = {0-521-41068-1},
	Mrclass = {13H10 (13-02)},
	Mrnumber = {1251956},
	Mrreviewer = {Matthew Miller},
	Pages = {xii+403},
	Publisher = {Cambridge University Press},
	Series = {Cambridge Studies in Advanced Mathematics},
	Title = {Cohen-{M}acaulay rings},
	Volume = {39},
	Year = {1993}
 }

\bib{}{book}{
title = {Chapter 3 - Primes, Primitive Roots and Sequences},
editor = {Thomas W. Cusick and Cunsheng Ding and Ari Renvall},
series = {North-Holland Mathematical Library},
publisher = {Elsevier},
volume = {66},
pages = {45--81},
year = {2004},
booktitle = {Stream Ciphers and Number Theory},
issn = {0924-6509},
doi = {https://doi.org/10.1016/S0924-6509(04)80005-9},
url = {https://www.sciencedirect.com/science/article/pii/S0924650904800059}
} 

\bib{}{article}{
title = {A counterexample to a conjecture of Ding},
journal = {Journal of Algebra},
volume = {452},
pages = {324--337},
year = {2016},
issn = {0021-8693},
doi = {https://doi.org/10.1016/j.jalgebra.2016.01.015},
url = {https://www.sciencedirect.com/science/article/pii/S0021869316000399},
author = {Alessandro {De Stefani}},
keywords = {Ding's conjecture, Index of a ring, Auslander's delta invariant, Maximal Cohen–Macaulay approximations}
}

 \bib{}{thesis}{
        Address = {Brandeis University}
	Author = {Ding, Songqing},
	Title = {Cohen-{M}acaulay approximations over a Gorenstein local ring},
	Volume = {153},
	Year = {1990}}

 \bib{}{article}{
	Author = {Ding, Songqing},
	Coden = {JALGA4},
	Date-Added = {2012-08-10 00:17:34 +0000},
	Date-Modified = {2012-08-10 00:17:35 +0000},
	Fjournal = {Journal of Algebra},
	Issn = {0021-8693},
	Journal = {J. Algebra},
	Mrclass = {13H10 (13C14)},
	Mrnumber = {1198202},
	Mrreviewer = {J. K. Verma},
	Number = {2},
	Pages = {271--288},
	Title = {Cohen-{M}acaulay approximation and multiplicity},
	Volume = {153},
	Year = {1992}}

\bib{}{article}{
	Author = {Ding, Songqing},
	Coden = {COALDM},
	Date-Added = {2012-08-10 00:17:34 +0000},
	Date-Modified = {2012-08-10 00:17:35 +0000},
	Fjournal = {Communications in Algebra},
	Issn = {0092-7872},
	Journal = {Comm. Algebra},
	Mrclass = {13H10 (13C14)},
	Mrnumber = {1194550},
	Mrreviewer = {Roger A. Wiegand},
	Number = {1},
	Pages = {53--71},
	Title = {A note on the index of {C}ohen-{M}acaulay local rings},
	Volume = {21},
	Year = {1993}}

 \bib{}{article}{
	Author = {Ding, Songqing},
	Coden = {PAMYAR},
	Date-Added = {2012-08-10 00:17:34 +0000},
	Date-Modified = {2012-08-10 00:17:35 +0000},
	Fjournal = {Proceedings of the American Mathematical Society},
	Issn = {0002-9939},
	Journal = {Proc. Amer. Math. Soc.},
	Mrclass = {13H10 (13A30 13C14)},
	Mrnumber = {1181160},
	Mrreviewer = {J{\"u}rgen Herzog},
	Number = {4},
	Pages = {1029--1033},
	Title = {The associated graded ring and the index of a {G}orenstein local ring},
	Volume = {120},
	Year = {1994}}

 \bib{}{article}{
	Author = {Hashimoto, Mitsuyasu},
        Author = {Shida, Akira},
	Coden = {JALGA4},
	Date-Added = {2012-08-10 00:17:34 +0000},
	Date-Modified = {2012-08-10 00:17:35 +0000},
	Fjournal = {Journal of Algebra},
	Issn = {0021-8693},
	Journal = {J. Algebra},
	Mrclass = {13H10 (13C14)},
	Mrnumber = {1425563},
	Mrreviewer = {Aron Simis},
	Number = {1},
	Pages = {150--162},
	Title = {Some remarks on index and generalized {L}oewy length of a {G}orenstein local ring},
	Volume = {187},
	Year = {1997}}

\bib{}{article}{
	Address = {Providence, RI},
	Author = {Herzog, J{\"u}rgen},
	Booktitle = {Commutative algebra: syzygies, multiplicities, and birational algebra ({S}outh {H}adley, {MA}, 1992)},
	Date-Added = {2012-08-10 00:17:34 +0000},
	Date-Modified = {2012-08-10 00:17:35 +0000},
	Mrclass = {13H10 (13A02)},
	Mrnumber = {1266181},
	Mrreviewer = {Rafael H. Villarreal},
	Pages = {95--102},
	Publisher = {Amer. Math. Soc.},
	Series = {Contemp. Math.},
	Title = {On the index of a homogeneous {G}orenstein ring},
	Volume = {159},
	Year = {1994}}

\bib{}{article}{
  title={On delta invariants and indices of ideals},
  author={Toshinori Kobayashi},
  journal={Journal of the Mathematical Society of Japan},
  year={2017}
}

\bib{}{book}{,
	Author = {Leuschke, Graham J.},
        Author = {Wiegand, Roger},
	Isbn = {978-0-8218-7581-0},
	Mrclass = {13C14 (13H10 16G10)},
	Mrnumber = {2919145},
	Mrreviewer = {Geoffrey D. Dietz},
	Note = {xviii+367 pages. ISBN: 978-0-8218-7581-0},
	Pages = {xviii+367},
	Publisher = {American Mathematical Society, Providence, RI},
	Series = {Mathematical Surveys and Monographs},
	Title = {Cohen-{M}acaulay representations},
	Url = {http://www.leuschke.org/research/MCMBook},
	Volume = {181},
	Year = {2012},
	Bdsk-Url-1 = {http://www.leuschke.org/research/MCMBook}}

\bib{}{book}{
place={Cambridge}, 
edition={2}, 
series={Encyclopedia of Mathematics and its Applications}, 
title={Finite Fields}, 
DOI={10.1017/CBO9780511525926}, publisher={Cambridge University Press}, 
Author={Lidl, Rudolf}
Author={Niederreiter, Harald}, 
year={1996}, 
collection={Encyclopedia of Mathematics and its Applications}}
 
\bib{}{book}{Address = {Cambridge},
	Author = {Matsumura, Hideyuki},
	Date-Added = {2012-08-10 00:17:34 +0000},
	Date-Modified = {2012-08-10 00:17:35 +0000},
	Edition = {Second},
	Isbn = {0-521-36764-6},
	Mrclass = {13-01},
	Mrnumber = {1011461},
	Note = {Translated from the Japanese by M. Reid},
	Pages = {xiv+320},
	Publisher = {Cambridge University Press},
	Series = {Cambridge Studies in Advanced Mathematics},
	Title = {Commutative ring theory},
	Volume = {8},
	Year = {1989}}

\bib{}{article}{
title = {Fundamental gaps of numerical semigroups generated by two elements},
journal = {Linear Algebra and its Applications},
volume = {405},
pages = {200-208},
year = {2005},
issn = {0024-3795},
doi = {https://doi.org/10.1016/j.laa.2005.03.014},
url = {https://www.sciencedirect.com/science/article/pii/S0024379505001424},
author = {J.C. Rosales}}

\bib{}{article}{
author = {Judith D. Sally},
title = {{Super-regular sequences.}},
volume = {84},
journal = {Pacific Journal of Mathematics},
number = {2},
publisher = {Pacific Journal of Mathematics, A Non-profit Corporation},
pages = {465--481},
year = {1979}}

\bib{}{article}{
author = {Oana Veliche}
title = {The index of a numerical semigroup ring},
journal = {Journal of Pure and Applied Algebra},
volume = {217},
number = {10},
pages = {1994-2001},
year = {2013},
issn = {0022-4049},
doi = {https://doi.org/10.1016/j.jpaa.2013.01.006},
url ={https://www.sciencedirect.com/science/article/pii/S0022404913000157}}
        
    \end{biblist}
\end{bibdiv}

\bibliographystyle{amsplain}
}
\end{document}